\documentclass[12pt,reqno]{article}
\topmargin -0.1 in
\oddsidemargin 0pt \headheight 0pt \headsep 0pt \textwidth 165true
mm \textheight=9.1in

\usepackage{amssymb}
\usepackage{amsmath}
\usepackage{amsthm}
\usepackage{color}

\newcommand{\br}[3]{{$#1$}$\lower4pt\hbox{$\tp\atop\raise4pt \hbox{$\scriptscriptstyle{#2}$}$} ${$#3$}}
\newcommand{\tw}[3]{{$#1$}${\,\scriptscriptstyle {#2}}\atop\raise9pt\hbox{$\scriptstyle\tp$} ${$#3$}}
\newcommand{\ttps}[2]{{#1}\raise5pt\hbox{$\lower12pt\hbox{$\scriptstyle\tp$}\atop \lower0pt\hbox{$\tilde\;$}$}\raise4.5pt\hbox{${\scriptstyle{#2}}$}}
\newcommand{\st}[1]{\mbox{${\,\scriptscriptstyle {#1}}\atop\raise5.5pt\hbox{$*$}$}}

\newcommand{\rd}[1]{\mbox{${\,\scriptscriptstyle {#1}}\atop\raise5.5pt\hbox{$\bullet$}$}}
\newcommand{\rt}[1]{\otimes_\chi}
\newcommand{\lt}[1]{\mbox{${\,\scriptscriptstyle {#1}}\atop\raise5.5pt\hbox{$\ltimes$}$}}
\newcommand{\btr}{\raise1.2pt\hbox{$\scriptstyle\blacktriangleright$}\hspace{2pt}}
\newcommand{\btl}{\raise1.2pt\hbox{$\scriptstyle\blacktriangleleft$}\hspace{2pt}}

\newcommand{\lcr}{\raise1.0pt \hbox{${\scriptstyle\rightharpoonup}$}}
\newcommand{\rcr}{\raise1.0pt \hbox{${\scriptstyle\leftharpoonup}$}}

\newcommand{\ttp}{{\lower12pt\hbox{$\tp$}\atop \hbox{$\tilde\;$}}}

\newcommand{\id}{\mathrm{id}}

\newcommand{\Tc}{\mathcal{T}}

\newcommand{\Fin}{\mathrm{Fin}}
\newcommand{\Proj}{\mathrm{Proj}}
\newcommand{\Krm}{\mathrm{K}}

\newcommand{\Bc}{\mathcal{B}}

\newcommand{\Ac}{{\mathcal{A}}}

\newcommand{\A}{{A}}

\newcommand{\Ru}{\mathcal{R}}

\newcommand{\Fc}{\mathcal{F}}

\newcommand{\Q}{\mathcal{Q}}
\renewcommand{\O}{\mathcal{O}}

\newcommand{\C}{\mathbb{C}}
\newcommand{\Sbb}{\mathbb{S}}
\newcommand{\Qbb}{\mathbb{Q}}

\newcommand{\Z}{\mathbb{Z}}

\newcommand{\N}{\mathbb{N}}

\newcommand{\tp}{\otimes}

\newcommand{\zt}{\zeta}

\newcommand{\U}{U}

\newcommand{\ve}{\varepsilon}
\newcommand{\gm}{\gamma}

\newcommand{\dt}{\delta}

\newcommand{\op}{\oplus}
\newcommand{\la}{\lambda}
\newcommand{\tr}{\triangleright}
\newcommand{\tl}{\triangleleft}

\newcommand{\Char}{\mathrm{ch }}

\newcommand{\End}{\mathrm{End}}

\newcommand{\Span}{\mathrm{Span}}

\newcommand{\Hom}{\mathrm{Hom}}

\newcommand{\Ind}{\mathrm{Ind}}

\newcommand{\Rm}{\mathrm{R}}

\newcommand{\ad}{\mathrm{ad}}

\newcommand{\La}{\Lambda}

\newcommand{\g}{\mathfrak{g}}

\renewcommand{\k}{\mathfrak{k}}
\newcommand{\h}{\mathfrak{h}}

\newcommand{\s}{\mathfrak{s}}

\renewcommand{\o}{\mathfrak{o}}
\newcommand{\n}{\mathfrak{n}}

\newcommand{\eps}{\epsilon}

\newcommand{\nn}{\nonumber}
\newcommand{\p}{\mathfrak{p}}
\renewcommand{\l}{\mathfrak{l}}

\newcommand{\si}{\sigma}
\newcommand{\al}{\alpha}
\newcommand{\bt}{\beta}

\newcommand{\be}{\begin{eqnarray}}
\newcommand{\ee}{\end{eqnarray}}

\newtheorem{thm}{Theorem}[section]
\newtheorem{propn}[thm]{Proposition}
\newtheorem{lemma}[thm]{Lemma}
\newtheorem{corollary}[thm]{Corollary}
\newtheorem{conjecture}{Conjecture}

\newtheorem{remark}[thm]{Remark}

\newcount\prg

\newcommand{\parag}{\advance\prg by1 {\noindent\bf\thesection.\the\prg\hspace{6pt}}}

\begin{document}

\title{Equivariant vector bundles over even quantum spheres}
\author{
A. Mudrov\footnote{This study is supported  by the RFBR grant 15-01-03148.}
\vspace{20pt}\\
{\em \small  In memory of Ludwig Faddeev}
\vspace{10pt}\\
\small Department of Mathematics,\\ \small University of Leicester, \\
\small University Road,
LE1 7RH Leicester, UK\\
\small e-mail: am405@le.ac.uk
\\
%\select{DRAFT}
}

\date{ }

\maketitle
\begin{abstract}
We quantize $SO(2n+1)$-equivariant vector bundles on an even complex sphere $\mathbb{S}^{2n}$
as one-sided projective modules over its quantized coordinate ring.
We realize them in two different  ways: as  linear maps between pseudo-parabolic
 modules and as induced modules of the orthogonal quantum group.
Based on this alternative, we study representations of a quantum symmetric pair related to $\mathbb{S}^{2n}_q$
and prove their complete reducibility.
\end{abstract}
{\small \underline{Key words}:  quantum groups, quantum spheres, equivariant vector bundles, symmetric pairs.}
\\
{\small \underline{AMS classification codes}: 17B10, 17B37, 53D55.}
\newpage
%\tableofcontents
\section{Introduction}
In this paper we construct one-sided projective  modules over the quantized coordinate ring of the
even sphere $\Sbb^{2n}$, $n\geqslant 1$,
that are equivariant with respect to
the quantized orthogonal group $SO(2n+1)$. Such modules can be viewed as quantized equivariant vector bundles
naturally extending the context of the conventional quantization programme for Poisson manifolds \cite{BFFLS}.

Spheres are important manifolds in the classical geometry and topology \cite{H}, as well as for applications in other areas
\cite{Schw}.
There has been a persistent interest to their non-commmutative counterparts \cite{P,VS,FRT,Dab,DL,LM} since the invention of non-commutative geometry
\cite{C} and quantum groups \cite{D}.  Although in some aspects all quantum spheres admit a uniform description,
e.g. as subvarieties of quantum Euclidian planes \cite{FRT}, the even-dimensional case has been less covered
in the literature. The main reason is   a general effect when a pair
(group,subgroup) determining a classical homogeneous space may be destroyed in quantization. This is observed
for spheres as homogeneous spaces of orthogonal groups, however the odd dimensional case allows for reduced symmetries
that survive quantization. Along these lines, $\Sbb^{2n-1}_q$ were approached via the quantum unitary groups in \cite{VS}
and $\Sbb^{4n-1}_q$ via the quantum symplectic group in \cite{LPR}. Projective modules over odd quantum spheres
were recently studied in \cite{Sheu}. On the contrary, $\Sbb^{2n}$
lacks its quantum stabilizer subgroup but benefits from being a conjugacy class of a connected group,
$G=SO(2n+1)$. This enables one with the machinery of the  category $\O$
to construct and study $\Sbb^{2n}_q$ via its faithful representation on a highest weight module, \cite{M2}.
In the present paper we extend that study to equivariant vector bundles over  $\Sbb^{2n}$.

Semi-simple conjugacy classes of a simple Lie group fall into two classes depending on the type of their stabilizer
subgroup.
A Levi subalgebra $\k$ of a simple Lie algebra $\g$ has a basis of simple roots that is a part of the total basis.
As a consequence, its universal enveloping algebra $U(\k)$  is quantized as a Hopf subalgebra in the total quantum group
$U_q(\g)$.
A regular approach to equivariant star product quantization of vector bundles over
conjugacy classes  with such stabilizer subgroups was developed in \cite{DM}.
However, if some simple roots of $\k$ are not simple with respect to the fixed polarization of $\g$, it cannot be quantized as a subgroup, which disadvantage
makes this case rather challenging.
Nevertheless it is known that  the trivial vector bundles (algebras of functions) on such classes are quantizable,  \cite{M2}.
This fact suggests that a uniform quantization scheme should
exist for all vector bundles on semi-simple classes of  all types (this conjecture is also supported by a local   star product
on $\Sbb^{2n}$ constructed in \cite{M1}, similarly to the Levi case \cite{EEM}). In the present paper we
implement this programme for all equivariant vector bundles on $\Sbb^{2n}$.

The Poisson structure on $\Sbb^{2n}$ under study is restricted from the Semenov-Tian-Shansky bracket on $G$, which is related to the Reflection Equation \cite{KS}.
It makes $G$ a Poisson-Lie manifold over the  Poisson group $G$ endowed with the Drinfeld-Sklyanin bracket, with respect to the conjugation action.
The quantum sphere is treated as a subvariety of $G_q$ and assumes an (conjugation) action of the corresponding quantum group $U_q(\g)$, $\g=\s\o(2n+1)$, on the quantized
 polynomial ring $\C_q[\Sbb^{2n}]$ (we mean the Poisson group $G$ that gives rise to standard  $U_q(\g)$, \cite{ChP}).

One possible realization  of $\C_q[\Sbb^{2n}]$ represents it by
linear operators on  a special  highest weight $U_q(\g)$-module $M$,  which we call base module \cite{M2}.
To  construct an equivariant projective $\C_q[\Sbb^{2n}]$-module, one
should find an invariant idempotent $\hat P$ in the algebra $\End(V)\tp \C_q[\Sbb^{2n}]$ for some finite-dimensional $U_q(\g)$-module $V$.
Such an idempotent projects $V\tp M$ onto a direct summand  submodule and therefore can be obtained
from an irreducible decomposition of $V\tp M$, privided certain technical conditions are fulfilled.
In the case of Levi $\k$, submodules of highest weight in $V\tp M$ are parabolically induced from irreducible $U_q(\k)$-submodules in $V$, which in the classical limit
turn to the fibers over the initial point.
Inspite of no $U_q(\k)\subset U_q(\g)$ for non-Levi $\k$, one still may expect
that the classical irreducible $\k$-submodules in $V$ parameterize $U_q(\g)$-submodules of highest weight in $V\tp M$ as in the Levi case.

Thus the problem is to decompose  $V\tp M$ into a direct sum of highest weight modules, and
describe the irreducible summands. The first question is discussed in general  in \cite{M3}, where
a criterion for complete reducibility of $V\tp M$ is formulated in terms of a canonical contravariant form on  $V\tp M$.
This form is the product of unique, up to a scalar,  contravariant forms on $V$ and $M$. Then $V\tp M$ is completely
reducible if and only if the restriction of the canonical form to the span of singular vectors in $V\tp M$ is non-degenerate.
We apply this criterion to the base module $M$ and all finite-dimensional modules $V$
and prove complete reducibility of $V\tp  M$ for all $q$ not a root of unity.
To answer the second question, we define pseudo-parabolic  modules that generalize parabolic modules for Levi subalgebras.
We prove that all highest weight submodules in $V\tp M$ are pseudo-parabolic, at generic $q$.

As  spheres are symmetric spaces, we take advantage of an alternative presentation of $\C_q[\Sbb^{2n}]$ via quantum symmetric pairs. That part of the paper is analogous to \cite{M4} devoted to quantum projective spaces.
There is a 1-parameter family of solutions of the Reflection Equation \cite{KS} associated with $U_q(\g)$.
Every solution defines a
 one-dimensional representation of $\C_q[\Sbb^{2n}]$ (a ``point" in $\Sbb^{2n}_q$) and facilitates its realization as
a subalgebra in the Hopf dual $\Tc$ to  $U_q(\g)$. At the same time, it defines a left coideal subalgebra $\Bc\subset U_q(\g)$
such that $\C_q[\Sbb^{2n}]$ is identified with the subalgebra of $\Bc$-invariants in $\Tc$ under the action  by right translations.
The algebra $\Bc$ is a deformation of $U(\k')$, where $\k'\simeq \k$ is the isotropy Lie algebra  of the  classical limit of the
``quantum
point".

We prove that every finite-dimensional $U_q(\g)$-module $V$ is completely reducible over $\Bc$ and that simple
$\Bc$-submodules are deformations
of irreducible $\k'$-submodules. They are constructed via invariant projectors from $V\tp M$ onto the corresponding
pseudo-parabolic Verma submodules.
Finally, we identify projective left $\C_q[\Sbb^{2n}]$-modules with $\Bc$-invariants in $\Tc\tp X$, where
$X$ is a right $\Bc$-module and $\Tc$ is equipped with the  $\Bc$-action by right translations. It carries a $\U_q(\g)$-action
via the left translations on $\Tc$. This is a deformation of the classical construction of the equivariant vector bundle
associated with the fiber $X$.

The paper is organized as follows.
Section 2 is devoted to certain properties of extremal projectors that we need for construction of singular vectors.
 We use these results for description of irreducible submodules of $V\tp M$ in
Section 3, where we also prove its complete reducibility. Therein we describe the pseudo-parabolic subcategory
associated with the quantum sphere. Section 4 is devoted to equvariant quantum vector bundles.
We present them as projective modules over $\C_q[\Sbb^{2n}]$ using an equivalence with the pseudo-parabolic category
and compute their equivariant  Grothendieck group. Finally, we give a description of vector bundles via invariants
of the coideal subalgebra from the quantum symmetric pair. Appendix A proves that parabolic modules
are locally finite over the underlying quantum Levi subgroups. Appendix B contains some useful facts about
tensor products of lowest and highest weight vectors.

\section{Singular vectors of $U_q\bigl(\s\o(5)\bigr)$}
This is a technical section devoted to certain aspects of representation theory
of the quantum group $U_q\bigl(\s\o(5)\bigr)$.
It is preparatory for the subsequent section, where we define and study
the main tool of our approach to quantum vector bundles: generalized parabolic Verma modules.

For a detailed  exposition  of general quantum groups
the reader is referred to \cite{D,ChP}. A specific reminder of the odd orthogonal quantum group of rank $n>0$ will be given later
in Section \ref{Sec_OrthQG}. Here we start with the simplest interesting case of  $U_q\bigl(\s\o(5)\bigr)$ and study singular vectors  in its modules with the help of extremal projectors shifted by a ``spectral parameter"
\cite{Zh,KT}. We present them in a form that is convenient for
our applications and establish some useful facts that we have not found in the literature.
\subsection{Shifted extremal projector}
In what follows, we use the shortcuts $\bar q=q^{-1}$, $[z]_q=\frac{q^z-q^{-z}}{q-q^{-1}}$, and $[x,y]_a=xy-ayx$ for any $a\in \C$.
The complex parameter $q\not =0$ is assumed to be not a root of unity.

Let $e,f,q^{\pm h}$ denote generators of $U_q\bigl(\s\l(2)\bigr)$ satisfying
$$
q^{\pm h}e=q^{\pm 2}eq^{\pm h}, \quad
q^{\pm h}f=q^{\mp 2}fq^{\pm h}, \quad
[e,f]=[h]_q.
$$
To accommodate certain operators of interest, we need to extend $U_q\bigl(\s\l(2)\bigr)$ to an algebra $\hat U_q\bigl(\s\l(2)\bigr)$. First of all, we extend the Cartan subalgebra to the field  $\C(q^{h})$ of rational functions in $q^h$. Secondly,
we include formal series in $f^ke^m$ of the same weight with coefficients from $\C(q^{h})$.  Such an extension can be done for an arbitrary quantum group, see  \cite{KT} for details.

Define a one-parameter family  $\pi(s)\in \hat U_q\bigl(\s\l(2)\bigr)$ by
\be
\label{translationed_proj}
\pi(s)=\sum_{k=0}^\infty  f^k e^k \frac{(-1)^{k}q^{k(s-h-1)}}{[k]_q!\prod_{i=1}^{k}[s+i]_q}, \quad s\in \C.
\ee
More exactly, it belongs to the  zero weight subalgebra  $\hat U_q^0\bigl(\s\l(2)\bigr)\subset \hat U_q\bigl(\s\l(2)\bigr)$.
The element $\pi(s)$ features the following properties, which can be checked by a direct calculation:
\be
  e\pi(s)=\pi(s+1)\frac{q^{-h}[s+1-h]_q}{[s+1]_q}e,
 \quad
  f\pi(s)=\pi(s-1)\frac{q^{h+2}[s]_q}{[s-2-h]_q}f.
  \label{alpha-intertwiner}
\ee
It follows that $\pi(h+1)$ is the extremal projector, i.e. an idempotent satisfying
$e\pi(h+1)=0$, $\pi(h+1)f=0$, cf. \cite{KT}.
Another corollary is that
 $\pi(s)$ is well defined on every $U_q\bigl(\s\l(2)\bigr)$-module of highest weight,
where it returns
\be
q^{-m\mu(h)+m(m-3)}\prod_{i=1}^{m} \frac{[-s+\mu(h)+i]_q}{[-s-i]_q},
\label{Zhb_fact}
\ee
on a vector $v$ of weight $\mu$ such that $e^{m}v$ is the highest vector.

We consider finite-dimensional $U_q\bigl(\s\l(2)\bigr)$-modules whose weights belong to $q^\Z$. Then the following fact is important for our exposition.
\begin{propn}
  For all $s\in \C$ such that $q^{2s}\in -q^{\Qbb}$, the operator $\pi(s)$ is invertible on every finite-dimensional module.
  \label{pi-invertible}
\end{propn}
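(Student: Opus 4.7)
The plan is to reduce invertibility of $\pi(s)$ to a diagonal calculation on the simple summands of $V$ and to apply the eigenvalue formula~(\ref{Zhb_fact}). First, a finite-dimensional $U_q\bigl(\s\l(2)\bigr)$-module $V$ decomposes into simple highest-weight summands $L(\epsilon,n)$ of type $\epsilon\in\{\pm1\}$ and highest weight $n\in\Z_{\geq0}$. Since $\pi(s)\in \hat U_q^0\bigl(\s\l(2)\bigr)$ is zero-weight and the defining series~(\ref{shifted_proj}) truncates on each summand (because $e$ is nilpotent), each $L(\epsilon,n)$ is $\pi(s)$-stable and I may work on a single summand.

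Second, I would write $\pi(s)$ as a diagonal operator on $L(\epsilon,n)$: on a weight vector $v$ of weight $\mu$ such that $e^m v$ is the highest vector ($m\in\{0,\dots,n\}$), formula~(\ref{Zhb_fact}) gives $\pi(s)v = \xi_m(s)\,v$ with
$\xi_m(s) = q^{m\mu(h)+m(m+1)}\prod_{i=1}^{m}\frac{[-s+\mu(h)+i]_q}{[-s-i]_q}.$
Invertibility on $L(\epsilon,n)$ then reduces to $\xi_m(s)$ being finite and non-zero for every such $m$, together with well-definedness of the coefficients $\pi^k(s)$ in~(\ref{shifted_proj}), which asks $[s+i]_q\neq 0$ for $1\leq i\leq n$.

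Third, I would locate the zero loci via the identity $[a]_q=0\iff q^{2a}=1$: the denominators $[-s-i]_q$ and $[s+i]_q$ vanish precisely on $\{q^{2s}=q^{-2i}\}$, while the numerators $[-s+\mu(h)+i]_q$ vanish on $\{q^{2s}=q^{2(\mu(h)+i)}\}$. Although a type $-1$ module may have $q^{\mu(h)}\in -q^{\Z}$, squaring absorbs the sign and yields $q^{2\mu(h)}\in q^{2\Z}$. The entire bad locus is therefore contained in the countable set $\{s\in\C : q^{2s}\in q^{2\Z}\}\subset\{s\in\C : q^{2s}\in q^{\Qbb}\}$.

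Finally, since $q$ is not a root of unity, $-1\notin q^{\Qbb}$ (otherwise $q^r=-1$ for some rational $r$ forces $q^{2p}=1$ for a non-zero integer $p$), so the coset $-q^{\Qbb}$ and the subgroup $q^{\Qbb}$ are disjoint in $\C^*$. The hypothesis $q^{2s}\in -q^{\Qbb}$ therefore excludes the bad locus: every $\xi_m(s)$ is a well-defined non-zero complex number and $\pi(s)$ acts as an invertible diagonal operator on $V$. The only delicate point is the bookkeeping of the type sign $\epsilon$, which naively would double the bad locus, but squaring inside the $q$-bracket arguments makes the sign irrelevant, so no further obstacle appears.
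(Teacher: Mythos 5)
Your argument is correct and is essentially the paper's own proof, just fully spelled out: the paper cites the eigenvalue formula~(\ref{Zhb_fact}) and notes that the zeros and poles occur only when $q^{2s}\in q^{2\Z}$, which is disjoint from $-q^{\Qbb}$ because $q$ is not a root of unity. Your additional care with the type~$-1$ summands (showing the sign disappears after squaring inside the $q$-brackets) is a legitimate refinement of the paper's implicit assumption $\mu(h)\in\Z$, but it does not change the route.
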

\begin{proof}
 Immediate corollary of (\ref{Zhb_fact}) since $\mu(h)\in \Z$ and $q$ is not a root of unity.
\end{proof}

Let $\si$ denote an algebra automorphism of $U_q\bigl(\s\l(2)\bigr)$
acting by the assignment $f\mapsto e$, $e\mapsto f$, $h\mapsto -h$.
Observe that the operator  $\si\bigl(\pi(s)\bigr)$ is well defined on every
finite-dimensional module. We are going to relate it to $\pi(s)$.

To that end, we extend  $\s\l(2)$ to $\g=\s\l(3)$
and use a fact that $\pi(s)$ is essentially a unique element satisfying a certain identity in  $\hat U_q(\g)$.
We assume that our $\s\l(2)$ corresponds  to a simple root $\al$. Let $\bt$ be the other simple root and put $\gm=\al+\bt$.

In the algebra $\hat U_q(\g)$, define
$$
f_{\gm}=f_\bt f-q ff_\bt,
\quad
e_\gm=ee_\bt-q^{-1}e_\bt e,
$$
$$
\hat f_{\gm}(s)=f_\bt f -ff_\bt \frac{[s]_q}{[s+1]_q},
\quad
\hat e_\gm(s)=ee_\bt-e_\bt e\frac{[s]_q}{[s+1]_q}.
$$
The vectors $e_\gm$ and $f_\gm$ form a quantum $\s\l(2)$-pair in $U_q(\g)$.
\begin{propn}
\label{factor}
The element $\pi(s)$ satisfies the equality
$$\pi(s) f_{\gm}=\hat f_{\gm}(s)\pi(s+1).$$
Furthermore, $\pi(h-s)$ is  a unique element from $\hat U_q^0\bigl(\s\l(2)\bigr)$, up to a factor  $a(h-s)\in \hat U_q(\h)$,
satisfying
\be
\hat e_\gm(s) \pi(h-s)= \pi(h-s-1)e_\gm.
\ee
 \end{propn}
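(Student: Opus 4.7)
The plan is to prove both parts of Proposition~\ref{factor} by substituting the series expansion $\pi(s) = \sum_{k\geq 0} f^k e^k \pi^k(s)$ and carrying out the commutations term by term. What makes the calculation tractable is the combination of the cross relations $[e_\al, f_\bt] = [e_\bt, f_\al] = 0$ and the quantum Serre relations inside $U_q(\s\l(3))$ that govern how the root vectors $f_\gm$ and $e_\gm$ interact with $f$ and $e$.

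For the first identity, I would begin by recording two auxiliary facts. A short direct computation using $[e, f_\bt] = 0$ and $[e, f] = [h]_q$ shows that $[e, f_\gm]$ is proportional to $f_\bt$ with a Cartan-valued coefficient. The Serre relation between $f$ and $f_\bt$ is equivalent to $f$ $q$-commuting with $f_\gm$ up to a scalar. Iterating the first, I would move $f_\gm$ past every $e^k$ inside the sum, lowering the power of $e$ by one at each step and generating a term linear in $f_\bt$; the second commutation then lets me push the surviving $f_\gm$ past $f^k$ with nothing but a $q$-scalar. After collecting, $\pi(s) f_\gm$ becomes a sum of PBW monomials of the form $f^j e^j\, f_\bt f$ and $f^j e^j\, f f_\bt$ with Cartan-rational coefficients. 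Comparison with the expansion $\hat f_\gm(s) \pi(s+1) = f_\bt f\,\pi(s+1) - f f_\bt \frac{[s]_q}{[s+1]_q}\pi(s+1)$ reduces the identity to a termwise matching of $q$-number factors: the ratio $\pi^k(s)/\pi^k(s+1)$, together with the Cartan shifts produced by reordering, must reproduce exactly the coefficient $\frac{[s]_q}{[s+1]_q}$. A cleaner variant, which bypasses most of the bookkeeping, is to evaluate both sides on a generic highest weight vector of a $U_q(\g)$-module; by formula~(\ref{Zhb_fact}) the statement reduces to an elementary identity of rational functions in $q^{\mu(h)}$ and $q^s$, and density over the parameter space lifts it to the operator identity.

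For the uniqueness part, I would take an arbitrary $X \in \hat U^0_q(\g_\al)$, depending rationally on $s$, satisfying the same functional equation as $\pi(h-s)$, and expand it as $X = \sum_{k \geq 0} f^k e^k X_k(h, s)$ with $X_k \in \hat U_q(\h)$. The analogues of the previous auxiliary facts on the $e$-side are that $[e_\gm, f]$ is a Cartan-valued multiple of $e_\bt$ (using $[e_\bt, f] = 0$) and that $e$ $q$-commutes with $e_\gm$. Substituting the expansion into $\hat e_\gm(s) X = X'\, e_\gm$ and matching the coefficients of PBW monomials of the form $f^k e^k e_\bt$ and $f^k e^k e\, e_\bt$ on both sides produces a first-order recursion in $k$ expressing $X_{k+1}(h,s)$ as a rational-in-$(q^h, q^s)$ multiple of $X_k(h,s)$. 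At generic $s$ this recursion is nondegenerate, so every $X_k$ with $k \geq 1$ is determined by $X_0 \in \hat U_q(\h)$. Identifying this $X_0$ with the free Cartan factor $a(h-s)$ yields exactly the uniqueness statement.

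The principal obstacle I expect is the bookkeeping of Cartan shifts: reordering $q^h$-factors past $e, f, e_\bt, f_\bt$ produces cascades of $q$-integers $[s+i]_q$ and $[h \pm j]_q$ that must telescope in exactly the right pattern, on one hand to match the prefactor $\frac{[s]_q}{[s+1]_q}$ built into $\hat f_\gm(s)$, and on the other hand to produce a solvable recursion in the uniqueness argument. The structural reason one should expect this to work is that $\pi(s)$ is engineered so that the intertwining identities~(\ref{alpha-intertwiner}) hold, and the $f_\bt$-deformed variants $\hat f_\gm(s)$ and $\hat e_\gm(s)$ are designed to inherit the same shift pattern.
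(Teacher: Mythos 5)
Your proposal is essentially the paper's one-line ``direct calculation'' spelled out in full, and the auxiliary commutation facts you isolate are exactly the ones that make the termwise bookkeeping close up: using $[e,f_\bt]=0$ one gets $[e,f_\gm]=-f_\bt q^{h+1}$ (a Cartan multiple of $f_\bt$), the Serre relation gives $f f_\gm = q f_\gm f$, and the $\si$-images $[e_\gm,f]=-q^{-h}e_\bt$, $e e_\gm = q e_\gm e$ govern the second identity; the PBW-recursion argument for uniqueness is also sound, since the $e_\bt$-component of $\hat e_\gm(s)X = X'e_\gm$ couples $X_k$ only to $X_{k\pm1}$ with an invertible (for generic $s$) Cartan coefficient, so all $X_k$ are fixed once $X_0\in\hat U_q(\h)$ is chosen. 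One caveat on your proposed shortcut: evaluating only on a generic \emph{highest-weight vector} $1_\mu$ does not suffice to verify the operator identity, because $f_\gm 1_\mu$ is not $\g_\al$-singular (it lies in the two-dimensional weight space spanned by $f f_\bt 1_\mu$ and $f_\bt f 1_\mu$), so formula~(\ref{Zhb_fact}) cannot be applied directly to it; you would either have to decompose $f_\gm 1_\mu$ into $\g_\al$-strings first or test the identity on the full Verma module, as the paper itself does when verifying~(\ref{e-pi-int}) ``in all finite dimensional representations and therefore in $\hat U_q(\g)$.''
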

\begin{proof}
Direct calculation.
\end{proof}
Define a family $C(m,s)$ of rational trigonometric functions of $s$ parameterized by $m\in \Z_+$ (the set of non-negative integers):
$$
C(m,s)=\sum_{k=0}^{\infty}(-1)^{k}q^{k(s+m -1)}\prod_{i=1}^{k}\frac{[m-i+1]_q}{[s+i]_q}.
$$
Note that all terms with $k>m$ vanish, so that the sum is finite.
\begin{lemma}
For all $m\in \Z_+$, $C(m,s)=q^{-m}\frac{[s]_q}{[s+m]_q}$.
\end{lemma}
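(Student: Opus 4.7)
The plan is induction on $m$. The base case $m=0$ is immediate since $C(0,s)=1$ is just the $k=0$ term of the sum and $q^{0}\,[s]_q/[s]_q = 1$.

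For the inductive step, assume $C(m-1,t)=q^{-(m-1)}\frac{[t]_q}{[t+m-1]_q}$ for all admissible $t$. In $C(m,s)$, split off the $k=0$ term and shift the summation index by $k=j+1$, pulling the common factor $q^{s+m-1}\frac{[m]_q}{[s+1]_q}$ out of the products. The remaining sum has the form
\begin{equation*}
\sum_{j=0}^\infty (-1)^{j} q^{j(s+m-1)}\prod_{i=1}^{j}\frac{[m-i]_q}{[s+i+1]_q},
\end{equation*}
which, after rewriting $q^{j(s+m-1)} = q^{j((s+1)+(m-1)-1)}$, is exactly $C(m-1,s+1)$. Thus
\begin{equation*}
C(m,s) = 1 - \frac{q^{s+m-1}[m]_q}{[s+1]_q}\, C(m-1,s+1).
\end{equation*}

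Applying the inductive hypothesis gives $C(m-1,s+1) = q^{-(m-1)}\frac{[s+1]_q}{[s+m]_q}$, and the $[s+1]_q$ factors cancel to yield
\begin{equation*}
C(m,s) = 1 - \frac{q^{s}[m]_q}{[s+m]_q} = \frac{[s+m]_q - q^{s}[m]_q}{[s+m]_q}.
\end{equation*}

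The final step is to recognize the numerator via the addition formula $[s+m]_q = q^{s}[m]_q + q^{-m}[s]_q$ (one of the two standard decompositions, readily verified from the definition of $[\cdot]_q$). This gives $[s+m]_q - q^{s}[m]_q = q^{-m}[s]_q$ and hence the claimed expression. No step poses any real obstacle; the only choice to make is selecting the correct form of the addition formula (the other form, $[s+m]_q = [s]_q q^{m} + q^{-s}[m]_q$, does not simplify as cleanly), and the inductive structure itself is forced by the shape of the product $\prod_{i=1}^{k}[s+i]_q$ in the denominator.
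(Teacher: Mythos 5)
Your proof is correct and is essentially the paper's own argument: the paper records the same recursion (in the form $C(m+1,s)=1-\frac{q^{s+m}[m+1]_q}{[s+1]_q}C(m,s+1)$), checks the closed form satisfies it, and concludes by induction, while you derive the recursion from the defining sum and then substitute the inductive hypothesis, finishing with the addition formula $[s+m]_q=q^{s}[m]_q+q^{-m}[s]_q$. The two write-ups differ only in which side of the identity the recursion is verified for; the key identity and the induction are identical.
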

\begin{proof}
Obviously the statement is true for $m=0$, as the summation over $k>0$ turns zero.
It is easy to check that the right-hand side satisfies the identity $C(m+1,s)=1-\frac{q^{s+m}[m+1]_q}{[s+1]_q}C(m,s+1)$.
Then induction on $m$ employing this equality proves the formula for $m>0$.
\end{proof}

\begin{propn}
\label{lowest-to-highest}
  The operator identity $\si\bigl(\pi(s)\bigr)=q^{-h}\frac{[s]_q}{[s+h]_q}\pi(h+s)$
  holds true in every finite-dimensional representation of $U_q\bigl(\s\l(2)\bigr)$.
\end{propn}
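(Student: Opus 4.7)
The plan is to reduce the claim to an identity of scalars on each weight space of each finite-dimensional irreducible $U_q(\g_\al)$-module. Since both sides lie in the zero-weight subalgebra $\hat U^0_q(\g_\al)$, they act diagonally on every weight vector; hence it suffices to show that they produce equal eigenvalues on every weight vector $v_k\in V_n$ of weight $\mu=n-2k$ for each $n\in\Z_+$ and $0\le k\le n$.

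First I evaluate the right-hand side on $v_k$. The Cartan prefactor $q^{-h}[s]_q/[s+h]_q$ returns $q^{-\mu}[s]_q/[s+\mu]_q$, while $\pi(h+s)$ reduces on $v_k$ to $\pi(s+\mu)$ (the $s$-parameter version with $s$ replaced by the scalar $s+\mu$), whose eigenvalue is read off directly from (\ref{Zhb_fact}) at the shifted parameter. Multiplying these two scalars gives the target eigenvalue in closed form.

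Second, I expand $\sigma(\pi(s))$ directly. Applying $\sigma$ term-by-term to the series (\ref{shifted_proj}) yields $\sigma(\pi(s))=\sum_j e^j f^j\,\sigma(\pi^j(s))$, where the Cartan part is obtained from $\pi^j(s)$ by the substitution $h\mapsto -h$. Acting on $v_k$ the series truncates at $j=n-k$, and the standard $q$-binomial action of $e^jf^j$ on weight vectors of $V_n$ produces a finite $q$-hypergeometric sum expressing the eigenvalue.

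The identification of the two eigenvalues is then a single summation identity. Rewriting it in $q$-Pochhammer notation with base $Q=q^2$, the sum becomes a balanced ${}_2\phi_1$ evaluated at the Chu--Vandermonde specialisation $z=cQ^n/b$, with parameters $b=Q^{k+1}$, $c=Q^{s+1}$ and upper index $Q^{-(n-k)}$. It is therefore summable in closed form by the $q$-Chu-Vandermonde (Gauss) theorem, and the resulting product cancels telescopically against the target eigenvalue, since the combined numerators and denominators on both sides consist of the same consecutive block of $[\,\cdot\,]_q$-factors. The special case $k=0$ reproduces exactly the $C(m,s)$ Lemma proved just above, so the present identity may be viewed as its natural one-parameter generalisation. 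The principal technical obstacle I anticipate is the bookkeeping required to pass between the symmetric convention $[a]_q=(q^a-q^{-a})/(q-q^{-1})$ and the asymmetric Pochhammer $(x;q)_n$, and to track correctly the $q$-power prefactors produced by this conversion.
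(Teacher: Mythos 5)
Your proposal is correct in outline and takes a genuinely different route from the paper. The paper's proof is structural: it applies $\si$ to the intertwining identity of Proposition~\ref{factor}, invokes the uniqueness clause there to conclude that $q^{h}\tilde\pi(s)/[s]_q$ and $\pi(h-s)$ differ only by a Cartan factor $a(h-s)$, and then pins down $a$ by evaluating on a \emph{single} extremal weight vector per module, where the whole series degenerates to $C(h,\mp s)$ modulo the left ideal generated by $e$ or $f$. You instead propose to verify the operator identity eigenvalue-by-eigenvalue on \emph{every} one-dimensional weight space of each $V_n$, bypassing the $\s\l(3)$ embedding and the uniqueness argument entirely at the cost of a heavier summation. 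I checked that the sum you form does have consecutive-term ratio of ${}_2\phi_1$ type with parameters $Q^{-(n-k)}$, $Q^{k+1}$, $Q^{s+1}$ and argument $z=Q^{s+n-2k}=cQ^{n-k}/b$ in base $Q=q^2$, i.e.\ precisely the $q$-Gauss (Chu--Vandermonde) specialisation, and the resulting product does telescope against the predicted eigenvalue. The trade-off: the paper's argument is leaner and needs only the $k=0$ instance of the identity (the $C(m,s)$ Lemma), while yours is more elementary and self-contained but demands the full one-parameter summation.

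One concrete warning about the bookkeeping you flag at the end: the printed eigenvalue formula (\ref{Zhb_fact}) appears to carry a sign error in its $q$-exponent. Evaluating $\pi(s)$ directly on the lowest weight vector of $V_1$, or iterating the recursion implied by (\ref{alpha-intertwiner}), gives eigenvalue $q^{-1}[s]_q/[s+1]_q$, whereas (\ref{Zhb_fact}) as printed returns $q^{+1}[s]_q/[s+1]_q$; the exponent should read $q^{-m\mu(h)-m(m+1)}$. If you substitute (\ref{Zhb_fact}) verbatim into your right-hand side, the pure $q$-power prefactors on the two sides of your Vandermonde identity will not cancel. Rather than relying on the printed formula, re-derive the eigenvalue of $\pi(h+s)$ from the intertwining relation (\ref{alpha-intertwiner}); with the corrected exponent your scheme closes exactly as you describe.
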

\begin{proof}
Put $\tilde \pi(s)=\si\bigl(\pi(-s)\bigr)$.
Applying $\si$ to the first equality in Proposition \ref{factor} and changing the sign of the parameter $s$ we get
\be
\hat e_\gm(s)\frac{1}{[s]_q}q^{h}\tilde \pi(s)=\frac{1}{[s+1]_q}q^{h}\tilde \pi(s+1) e_\gm
\quad\Rightarrow \quad \frac{1}{[s]_q}q^{h}\tilde \pi(s)=a(h-s) \pi(h-s),
\label{coefficient}
\ee
due to uniqueness,
cf. Proposition \ref{factor}. To complete the proof, we need to calculate the factor $a(h-s)$. We
do it by evaluating  (\ref{coefficient}) on a subspace of weight $\mu(h)=m\in \Z$.

Since $\tilde \pi(s)=C(h,-s)\mod \hat U_q\bigl(\s\l(2)\bigr)e$ on every non-negative weight, we have
$$
a(m-s)=-q^{m}\frac{1}{[s]_q}\frac{q^{-m}[s]_q}{[m-s]_q}=\frac{1}{[s-m]_q}, \quad m\geqslant 0.
$$
Applying $\si$ to $\tilde \pi(s)$ and evaluating the result
on $m<0$,  one arrives at
$\pi(s)=C(-h,s)\mod \hat U_q\bigl(\s\l(2)\bigr)f$
and hence at
$\pi(h-s)=C(-h,h-s)\mod \hat U_q\bigl(\s\l(2)\bigr)f$. So, for negative $m$,
the equality (\ref{coefficient}) turns to
$\frac{1}{[s]_q}q^{m}=a(m-s)C(-m,m-s)$ producing
the same result.
Therefore $a(h-s)=\frac{1}{[s-h]_q}$ on all weight vectors.
\end{proof}

\subsection{Dynamical root vectors of $U_q(\s\o(5)$}
To the end of the section we assume $\g=\s\o(5)$.
We express its simple roots $\al=\ve_1$ and $\bt=\ve_2-\ve_1$
through the orthogonal basis of short roots $\{\ve_1,\ve_2\}\in \h^*$ normalizing the inner product so that
$(\ve_i,\ve_i)=1$, $i=1,2$. We introduce  ``compound root vectors"
for $\gm=\ve_2$ and $\dt=\ve_1+\ve_2$  by
$$
f_{\gm}=[f_{\al},f_{\bt}]_{\bar q}
,\quad
e_{\gm}=[e_{\bt},e_{\al}]_{q},
$$
$$
f_{\dt}=[[f_{\al},f_{\bt}]_{\bar q},f_{\al}]
,\quad
e_{\dt}=[e_{\al},[e_{\bt},e_{\al}]_{q}].
$$
Along with $q^{\pm h_\gm}$ and $q^{\pm h_\dt}$, these pairs generate subalgebras isomorphic to  $U_q\bigl(\s\l(2)\bigr)$.
We denote these subalgebras by $\g^{(\eta)}$ for each positive root $\eta$.
Let $\g_+$ denote the linear span of $e_\al,e_\bt,e_\gm, e_\dt$, and  define $\g_-$ similarly.

We will also need a ``dynamical" version of the vectors $f_\dt$ and $e_\dt$:
\be
\hat f_\dt(s)&=&\bar p^2\Bigl( f_\al^2 f_\bt \frac{[s]_p}{[s+2]_p}-f_\al f_\bt f_\al[2]_p\frac{[s]_p}{[s+1]_p}+ f_\bt f_\al^2\Bigr),
\\
\hat e_\delta(s)&=&  e_\al^2 e_\bt -e_\al e_\bt e_\al[2]_p\frac{[s+2]_p}{[s+1]_p}+e_\bt e_\al^2\frac{[s+2]_p}{[s]_p},
\ee
where $p=q^{\frac{1}{2}}$ and $\bar p=p^{-1}$.
%$$
% f_{\dt}f_{ \al}=\bar qf_{\al} f_{\dt}.
%$$
%Note that $\hat f_\dt(s)$ and $\hat e_\dt(s)$ are regular at for $s$ in Proposition \ref{pi-invertible}.
Introduce a one-parameter family $\pi_\al(s)\in \hat U_q(\g)$ by
$$
\pi_\al(s)=\sum_{k=0}^\infty  f^k_\al e^k_\al \frac{[2]_p^k(-1)^{k}p^{k(s-2h_\al-1)}}{[k]_p!\prod_{i=1}^{k}[s+i]_p}.
$$
It is the image of $\pi(s)$ under the embedding $\s\l(2)\subset \g$ corresponding to the short root $\al$
(the parameter $q$ in the preceding subsections is to be replaced with $q_\al=p$).
%Note that  $\hat f_\dt(s)=\bar p^2\hat f_\dt$ modulo $U_q(\g_-)f_\bt$,
%that is, $\hat f_\dt(s)\in I^-_{M}$,  for $q^s=q^{2(\la,\dt)-1}=-\bar q^2$.
\begin{propn}
\label{dyn_f}
The following relations hold true:
\be
\pi_\al(s) f_{\dt}&=& \hat f_{\dt}(s)\pi_\al(s+2),
\label{f-pi-int}
\\
\pi_\al(2h_\al+s)e_{\dt}&=&\hat e_\dt(s)\pi_\al(2h_\al+s+2)q.
\label{e-pi-int}
\ee
%$$
%  \frac{q^{h+2}}{[s-2-h]_q}\pi(s-1)f=f\frac{\pi(s)}{[s]_q}\quad  \pi_\al(s-1)f_\al\frac{q^{h_\al}}{[s-2h_\al]_p}=f_\al\frac{\pi_\al(s)}{[s]_p}.
%$$
\end{propn}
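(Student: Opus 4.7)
My plan is to reduce Proposition \ref{dyn_f} to a direct calculation that iterates the single-root intertwining identity of Proposition \ref{factor}, since $f_\dt = [f_\gm, f_\al]$ and $e_\dt = [e_\al, e_\gm]$ are built from the simple $\al$-root vectors. The shift of the spectral parameter by $2$, rather than by $1$ as in Proposition \ref{factor}, will reflect the fact that $\dt = 2\al + \bt$ contains two copies of $\al$.

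For the first identity \eqref{f-pi-int} I would substitute $f_\dt = f_\gm f_\al - f_\al f_\gm$ into $\pi_\al(s) f_\dt$. Each summand can then be processed by first commuting $\pi_\al(s)$ past $f_\gm$ using the $B_2$-analog of Proposition \ref{factor}, valid on the short-root $\s\l(2)$-subalgebra $\g_\al$ with quantum parameter $p$ — where the Serre relation $\ad(f_\al)^3 f_\bt = 0$ takes the place of the $\s\l(3)$-Serre relation used originally — and then past the remaining $f_\al$ using \eqref{alpha-intertwiner}. After collecting terms in the basis $\{f_\al^2 f_\bt,\; f_\al f_\bt f_\al,\; f_\bt f_\al^2\}$ of the weight-$(-\dt)$ subspace of $U_q(\g_-)$ and simplifying, the three resulting coefficients should match those of $\hat f_\dt(s)$.

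The second identity \eqref{e-pi-int} I would then obtain from the first by applying the involution $\si$, extended to $U_q(\g)$ by $f_\bt \leftrightarrow e_\bt$. A short direct computation gives $\si(f_\dt) = \bar q\, e_\dt$ and $\si\bigl(\hat f_\dt(s)\bigr) = \bar p^{\,2}\,\tfrac{[s]_p}{[s+2]_p}\, \hat e_\dt(s)$. Invoking Proposition \ref{lowest-to-highest} to rewrite $\si(\pi_\al(s))$ as $\pi_\al(h_\al + s)$ up to a Cartan-valued $p$-number scalar, the shift $s \mapsto 2 h_\al + s$ and the extra factor $q$ on the right-hand side of \eqref{e-pi-int} should emerge from the bookkeeping of these conversions.

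The main obstacle will be the $p$-number combinatorics at the end of the $f$-calculation: a sum of products such as $\tfrac{[s]_p}{[s+1]_p} \cdot \tfrac{[s+1]_p}{[s+2]_p}$ must telescope into the single ratios $\tfrac{[s]_p}{[s+2]_p}$ and $\tfrac{[s]_p}{[s+1]_p}$ appearing in $\hat f_\dt(s)$. The identities needed are of the same flavor as the $C(m,s)$-lemma preceding Proposition \ref{lowest-to-highest}; once they are verified, both parts of the proposition follow.
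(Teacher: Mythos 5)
Your derivation of \eqref{e-pi-int} is exactly the paper's: apply $\si$ to \eqref{f-pi-int}, use $\si(f_\dt)=\bar q\,e_\dt$ and $\si(\hat f_\dt(s))=\bar p^2\frac{[s]_p}{[s+2]_p}\hat e_\dt(s)$, and substitute Proposition \ref{lowest-to-highest} for $\si(\pi_\al(s))$. (You wrote $\pi_\al(h_\al+s)$ at one point; with the short root $\al$ and parameter $p$ the argument must be $2h_\al+s$, which you do state correctly right after — so this is just a slip.)

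For \eqref{f-pi-int} the paper offers only ``a straightforward calculation,'' and your plan — expand $f_\dt=[f_\gm,f_\al]$, move $\pi_\al$ past $f_\gm$ by a $B_2$-version of Proposition \ref{factor}, then past $f_\al$ by \eqref{alpha-intertwiner} — is a reasonable way to organize that calculation. Two cautions, though. First, the intermediate claim $\pi_\al(s)f_\gm=\hat f_\gm(s)\pi_\al(s+1)$ is not in the paper and would need to be established; I believe it does hold, but the reason is not the cubic Serre relation $\mathrm{ad}(f_\al)^3 f_\bt=0$ that you invoke. What actually truncates the expansion of $\pi_\al(s)f_\gm$ after one step is the cheaper fact $[e_\al,f_\bt]=0$: one computes $[e_\al,f_\gm]\propto q^{h_\al}f_\bt$ and then $e_\al^2 f_\gm\in U_q(\g)e_\al$, exactly as in the $\s\l(3)$ case, so the shift is still $+1$. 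The cubic Serre relation only comes into play for $f_\dt$ itself, which is why $\hat f_\dt(s)$ has three terms rather than two. Second, when you then move $\pi_\al$ past $f_\al$, the form given in \eqref{alpha-intertwiner} has $f$ on the left of $\pi$; you will need to invert the Cartan factor to read it as a formula for $\pi_\al(s')f_\al$, which contributes an additional trigonometric coefficient. With those two points tightened, your computation should indeed reduce to checking three scalar coefficients in the basis $\{f_\al^2 f_\bt,\,f_\al f_\bt f_\al,\,f_\bt f_\al^2\}$, which is the content of a ``straightforward calculation.''
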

\begin{proof}
Equality (\ref{f-pi-int}) is checked through a straightforward  calculation.
Furthermore, the automorphism $\si$ takes $f_\dt$ to $\bar qe_\dt$
and $\hat f_\dt(s)$ to $\bar q\frac{[s]_p}{[s+2]_p}\hat e_\dt(s)$. Applying it to
(\ref{f-pi-int}) we get
$\si\bigl(\pi_\al(s)\bigr)e_{\dt}= \frac{[s]_p}{[s+2]_p}\hat e_\dt(s)\si\bigl(\pi_\al(s+2)\bigr).$
Substitution of   $\si\bigl(\pi_\al(s)\bigr)=\frac{p^{-2h_\al}[s]_p}{[s+2h_\al]_p}\pi_\al(2h_\al+s)$, thanks to
Proposition \ref{lowest-to-highest}, proves (\ref{e-pi-int}) in all finite-dimensional representations and therefore  in $\hat U_q(\g)$.
\end{proof}

\subsection{On singular vectors in Verma modules}
Recall that a weight vector in a $U_q(\g)$-module is called singular if it is annihilated by generators of $U_q(\g_+)$.
In this section we  derive explicit formulas for singular vectors of weights  $\mu-m\dt$, $m\in \Z_+$, in a Verma module of highest weight $\mu$ subject to certain conditions.

\begin{lemma}
  One has $f_\al\hat f_\dt(s+1)=\hat f_\dt(s)\frac{[s+1]_p}{[s+3]_p}f_\al.$
\label{din_dt-al_com}
\end{lemma}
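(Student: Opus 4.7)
The plan is to prove the identity by direct computation, using the definition of $\hat f_\dt(s)$ together with the quantum Serre relation for $f_\al$ and $f_\bt$ in $U_q(\s\o(5))$. Since $\al$ is short, $q_\al = p$ and the relevant Serre relation (arising from $a_{\al\bt}=-2$) reads
$$f_\bt f_\al^3 \;=\; f_\al^3 f_\bt - [3]_p\, f_\al^2 f_\bt f_\al + [3]_p\, f_\al f_\bt f_\al^2.$$

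First, I would expand both sides as linear combinations in the ordered PBW monomials $f_\al^3 f_\bt$, $f_\al^2 f_\bt f_\al$, $f_\al f_\bt f_\al^2$, $f_\bt f_\al^3$. Multiplying the definition of $\hat f_\dt(s+1)$ on the left by $f_\al$ produces the three monomials $f_\al^3 f_\bt$, $f_\al^2 f_\bt f_\al$, $f_\al f_\bt f_\al^2$ with explicit rational coefficients in $p^s$; multiplying $\hat f_\dt(s)$ on the right by $f_\al$ and pulling out the factor $[s+1]_p/[s+3]_p$ produces the three monomials $f_\al^2 f_\bt f_\al$, $f_\al f_\bt f_\al^2$, $f_\bt f_\al^3$.

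Next, I would use the Serre relation above to eliminate $f_\bt f_\al^3$ on the right-hand side, after which both sides are supported on the same three monomials. Matching coefficients reduces the lemma to two elementary $q$-number identities, namely
$$[3]_p [s+2]_p - [s]_p \;=\; [2]_p [s+3]_p, \qquad [3]_p [s+1]_p - [2]_p [s]_p \;=\; [s+3]_p,$$
each of which follows at once from expanding $(p^a-p^{-a})(p^b-p^{-b})$ and collecting terms.

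There is no genuine obstacle here: the only conceptual point is recognizing that the $B_2$ Serre relation on the $f_\al$ side (length three, since $\al$ is short) is precisely what converts the $f_\bt f_\al^3$ term on the right into the $f_\al^3 f_\bt$ term required on the left. The remainder is bookkeeping with $p$-numbers. I would carry this out by writing the two sides as a single polynomial identity with coefficients in $\C(p^s)$ and checking that each coefficient vanishes; the cleanest presentation is simply to record the three coefficient identities and verify them with the two $q$-number lemmas above.
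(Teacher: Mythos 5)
Your proof is correct, and it does go by a genuinely different route from the paper's. The paper derives Lemma \ref{din_dt-al_com} as a by-product of the shifted extremal projector machinery: starting from $f_\al f_\dt = p^2 f_\dt f_\al$, it conjugates $f_\al\hat f_\dt(s)\pi_\al(s+2)$ through the intertwining relations (\ref{alpha-intertwiner}) and (\ref{f-pi-int}), pushes $\pi_\al$ back and forth, and reads off the coefficient by evaluating on a highest weight vector of a generic Verma module. You instead bypass $\pi_\al$ entirely and verify the identity by a direct PBW computation: expand both sides into the monomials $f_\al^3 f_\bt$, $f_\al^2 f_\bt f_\al$, $f_\al f_\bt f_\al^2$, $f_\bt f_\al^3$, use the quartic Serre relation $f_\bt f_\al^3 = f_\al^3 f_\bt - [3]_p f_\al^2 f_\bt f_\al + [3]_p f_\al f_\bt f_\al^2$ (which is the expanded form of $[f_\al,\hat f_\dt]=0$, and the $(q+q^{-1}+1)$ that appears is indeed $[3]_p$ since $p=q^{1/2}$), and match coefficients. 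The two $p$-number identities you reduce to, $[3]_p[s+2]_p-[s]_p=[2]_p[s+3]_p$ and $[3]_p[s+1]_p-[2]_p[s]_p=[s+3]_p$, both check out. The trade-off: your version is self-contained and transparent, needing only the defining relations of $U_q(\s\o(5))$; the paper's version is terser and keeps the argument inside the $\pi_\al$-formalism that is being built up around it, at the cost of relying on evaluation in a generic Verma module to extract the scalar.
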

\begin{proof}
The Serre relations imply $f_\al f_\dt=p^2 f_\dt f_\al$, cf. Section \ref{Sec_OrthQG}.
Applying Proposition \ref{dyn_f} and then formula (\ref{alpha-intertwiner})to $ f_\al\hat f_\dt(s)\pi_\al(s+2)$  we write it as
$$
 f_\al\pi_\al(s) f_{\dt}
=\frac{p^{2h_\al+2}[s]_p}{[s-2-2h_\al]_p}\pi_\al(s-1)p^2 f_{\dt}  f_\al
=\frac{p^{2h_\al+4}[s]_p}{[s-2-2h_\al]_p} \hat f_{\dt}(s-1)\pi_\al(s+1)  f_\al
$$
$$
=\frac{p^{2h_\al+4}[s]_p}{[s-2-2h_\al]_p} \hat f_{\dt}(s-1)\frac{[s-2h_\al]p^{-2h_\al-2}}{[s+2]_p}f_\al\pi_\al(s+2)
=\frac{[s]_p}{[s+2]_p}\hat f_{\dt}(s-1)f_\al\pi_\al(s+2).
$$
Evaluating these on the highest vector of a generic Verma module and replacing $s$ with $s+1$ we prove the lemma.
\end{proof}
As a consequence, we obtain the formula
\be
\hat f_\dt(s)f_\al^2=\frac{[s+4]_p[s+3]_p}{[s+2]_p[s+1]_p}f_\al^2\hat f_\dt(s+2).
\label{aux2}
\ee
We will also need the  commutation relation
\be
[e_\bt ,\hat f_\dt(s)]=
 f_\al^2 \frac{[2]_p[s+h_\bt+2]_q\bar q}{[s+2]_p[s+1]_p},
\label{aux1}
\ee
which readily follows from the defining relations of $U_q(\g)$.

\begin{propn}
Let $\hat M_\mu$ be a Verma module of highest weight $\mu$ with the highest vector $1_\mu$.
Suppose that $\mu$ satisfies the condition
$
[(\mu,\dt)-m+2]_q=0
$
for some $m\in \N$.
Then the vector $\pi_\al(2h_\al+1) f_{\dt}^m1_\mu$ is singular.
\label{singular}
\end{propn}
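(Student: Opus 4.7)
Plan. It suffices to verify $e_\al v = 0$ and $e_\bt v = 0$ for $v := \pi_\al(2h_\al + 1) f_\dt^m 1_\mu$; the remaining positive root vectors $e_\gm$ and $e_\dt$ are $q$-commutators of $e_\al, e_\bt$ and therefore automatically annihilate $v$ once these do. The first assertion is essentially free: the operator $\pi_\al(2h_\al + 1)$ is the extremal projector of $\g_\al$ (the short-root specialization of the formula $\pi(h+1)$ of Section 2.1 via $h \mapsto 2h_\al$), so the identity $e\pi(h+1) = 0$ yields $e_\al \pi_\al(2h_\al + 1) = 0$, whence $e_\al v = 0$.

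For the second assertion, I first rewrite $v$ using Proposition \ref{dyn_f}. Let $s_0$ be the scalar value of the operator $2h_\al + 1$ on the weight space $V_{\mu - m\dt}$. Iterating (\ref{f-pi-int}) and using that $\pi_\al(s_0 + 2m) 1_\mu = 1_\mu$ (only the $k = 0$ term of the series survives because $e_\al 1_\mu = 0$) produces
\[
v \;=\; \hat f_\dt(t_1) \hat f_\dt(t_2) \cdots \hat f_\dt(t_m) \, 1_\mu, \qquad t_j := s_0 + 2(j-1).
\]
Since $e_\bt 1_\mu = 0$, applying $e_\bt$ leaves $m$ sandwiched commutators $[e_\bt, \hat f_\dt(t_j)]$. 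By (\ref{aux1}) each equals $f_\al^2$ times a scalar depending on the $h_\bt$-eigenvalue of the vector on its right, and that eigenvalue is the single constant $\mu_\bt := (\mu, \bt)$ throughout, since $\bt \perp \dt$ makes $h_\bt$ constant along the $\dt$-string. I then push each $f_\al^2$ leftward through the preceding $\hat f_\dt(t_i)$'s by repeated use of (\ref{aux2}).

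The decisive observation is a telescoping. Set $B_i := [t_i + 2]_p [t_i + 1]_p$. Because $t_{i+1} = t_i + 2$, relation (\ref{aux2}) rewrites the shift-coefficient as $A_i = B_{i+1}/B_i$; consequently $\prod_{i=1}^{j-1} A_i = B_j / B_1$, and this $B_j$ cancels precisely the $B_j$ appearing in the denominator of the scalar furnished by (\ref{aux1}) at position $j$. After this cancellation, all $m$ summands become scalar multiples of the single vector $f_\al^2 \hat f_\dt(t_2) \cdots \hat f_\dt(t_m) 1_\mu$, and
\[
e_\bt v \;=\; \frac{[2]_p \bar q}{B_1} \Bigl( \sum_{j=1}^m [t_j + \mu_\bt + 2]_q \Bigr) \cdot f_\al^2 \hat f_\dt(t_2) \cdots \hat f_\dt(t_m) 1_\mu.
\]
The elementary identity $\sum_{j=1}^m [u + 2j]_q = [m]_q [u + m + 1]_q$ (immediate from the geometric progressions for $q^{\pm u}$) collapses the inner sum to $[m]_q [s_0 + \mu_\bt + m + 1]_q$; with the paper's normalization of $h_\eta$, a short check unpacks $s_0 + \mu_\bt + m + 1$ as $(\mu, \dt) - m + 2$, so the hypothesis $[(\mu, \dt) - m + 2]_q = 0$ forces $e_\bt v = 0$. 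The main obstacle is the telescoping step: once one observes that (\ref{aux2}) puts $A_i$ in the form $B_{i+1}/B_i$ aligned with the spacing $t_{i+1} - t_i = 2$, the remainder of the computation is essentially forced.
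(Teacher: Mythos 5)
Your proof is correct and follows essentially the same path as the paper: iterating (\ref{f-pi-int}) to rewrite $v$ as $\hat f_\dt(t_1)\cdots\hat f_\dt(t_m)1_\mu$, expanding $e_\bt v$ into a sum of sandwiched commutators, applying (\ref{aux1}), pushing $f_\al^2$ left with (\ref{aux2}) so that the $B_{i+1}/B_i$ factors telescope against the $B_j$ in each commutator scalar, and evaluating the resulting $q$-number sum to $[m]_q[(\mu,\dt)-m+2]_q$. Your treatment of $e_\al v=0$ by invoking the operator identity $e_\al\pi_\al(2h_\al+1)=0$ (the extremal projector property from Section 2.1) is a slightly slicker route than the paper's push-through of $e_\al$ via (\ref{alpha-intertwiner}), but the difference is cosmetic.
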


\begin{proof}
It was mentioned that $\pi_\al(2h_\al+1)$ is the extremal projector $\hat U_q\bigl(\g^{(\al)}\bigr)$,
therefore $e_\al$ kills $\pi_\al(2h_\al+1) f_{\dt}^m1_\mu$. Explicitly, write it as $\pi_\al(s) f_{\dt}^m1_\mu$
with $s=2(\mu,\al)-2m+1$.
Using  (\ref{alpha-intertwiner}) we transform $e_\al\pi_\al(s) f_{\dt}^m$ to
$$
\pi_\al(s+1)\frac{p^{-2h_\al}[s+1-2h_\al]_p}{[s+1]_p}e_\al f_{\dt}^m=\pi_\al(s+1)q^{-2h_\al}e_\al f_{\dt}^m \frac{[s+1-2h_\al+2m-2]_p}{[s+1]_p}.
$$
Therefore $\pi_\al(s) f_{\dt}^m 1_\mu$ is annihilated by  $e_\al$ indeed.
Furthermore, notice that $e_\bt\pi_\al(s+2m)\in \hat U_q(\g)\g_+$ since $\pi_\al(s+2m)$ has zero weight.
Using the factorization
\be
\pi_\al(s) f_{\dt}^m= q^{-m}\hat f_{\dt}(s)\hat f_{\dt}(s+2)\ldots \hat f_{\dt}(s+2m-2)\pi_\al(s+2m)
\label{factorize f_dt}
\ee
that follows from (\ref{f-pi-int})
we present $e_\bt \pi_\al(s) f_{\dt}^m $ as
$$
\sum_{k=1}^{m} q^{-k}\prod_{i=1}^{k-1}\hat f_{\dt}(s+2i-2)\> [e_\bt,\hat f_\dt(s+2k-2)]\pi_\al(s+2k)f_{\dt}^{m-k} \mod \hat U_q(\g)\g_+.
$$
%    $$\hat f_\dt(s)f_\al^2=\frac{[s+4]_p[s+3]_p}{[s+2]_p[s+1]_p}f_\al^2\hat f_\dt(s+2).$$
  Using (\ref{aux1}) for the commutator  we rewrite this sum as
$$
[2]_p\sum_{k=1}^{m}q^{-k-1} \prod_{i=1}^{k-1}\hat f_{\dt}(s+2i-2)\> f_\al^2 \frac{[s+h_\bt+2k]_q}{[s+2k]_p[s+2k-1]_p}\pi_\al(s+2k)f_\dt^{m-k}.
$$
Pushing $f_\al^2$ to the left with the help of (\ref{aux2}) we get
$$
[2]_pq^{-2}f_\al^2 \sum_{k=1}^{m} \frac{[s+h_\bt+2k]_q}{[s+1]_p[s+2]_p}\prod_{i=1}^{k-1}q^{-1}\hat f_{\dt}(s+i+1)\> \pi_\al(s+2k)f_\dt^{m-k}.
$$
Now pushing $\pi_\al$ back to the left with the help of  (\ref{f-pi-int}) we restore the factor $f_\dt^{m-1}$ on the right and
arrive to
$$
[2]_pq^{-2}f_\al^2 \frac{\pi_\al(s+1)}{[s+1]_p[s+2]_p}\sum_{k=1}^{m} [s+h_\bt+2k]_qf_\dt^{m-1}.
$$
%$$
%\sum_{k=1}^{m} [s+h_\bt+2k]_q=\frac{q^{s+h_\bt+2}}{q-\bar q}\sum_{k=0}^{m-1}q^{2k}-\frac{q^{-s-h_\bt}}{q-\bar q}\sum_{k=0}^{m-1}q^{-2k}
%=[s+h_\bt+m+1]_q[m]_q
%$$
The sum here is equal to $[s+h_\bt+m+1]_q[m]_q$. Pushing $\pi_\al(s)$  to the right  with the use of
 (\ref{factorize f_dt}) and applying to the highest vector we eventually obtain
$$
f_\al^2\hat f_{\dt}(s+1)\hat f_{\dt}(s+3)\ldots \hat f_{\dt}(s+2m-3)\frac{[2]_p[m]_q[s+h_\bt+m+1]_q}{[s+1]_p[s+2]_p} q^{-m-1}1_\mu.
$$
Therefore, the vector in question is  annihilated by  $e_\bt$ if $[s+(\mu,\bt)+m+1]_q=0$. Substitution of $s=2(\mu,\al)-2m+1$
translates this to the condition on the value of $(\mu,\dt)$.
\end{proof}
In the modules of our interest appearing in the next section, the highest weight $\mu$ satisfies
$q^{2(\mu,\al)}=q^{2(\mu,\al^\vee)}_\al\in -q^\Z$, where $\al^\vee$ is the coroot $\frac{2\al}{(\al,\al)}$. Hence the
right-hand side in (\ref{factorize f_dt}) is regular at such weights by Proposition \ref{pi-invertible}.

Remark that the assumption on $\mu$ of Proposition \ref{singular} can be written as the condition $[(\mu+\rho,\dt)-m]_q=0$, where $\rho$ is the half-sum of positive roots of $\s\o(5)$. This is a pole of the quantum Shapovalov determinant of the module $\hat M_\mu$, \cite{DCK}.
\section{Generalized parabolic modules}
In this section we introduce and study a class of modules over the odd orthogonal quantum group
that play a role of "representations" for quantized vector bundles. Some of their properties are derived
from corresponding properties of parabolic modules, which are established in Section A.
We show that generalized parabolic modules form a nice category that is equivalent to the category of classical finite-dimensional modules over
the even orthogonal group, which is the stabilizer of the initial point on the even sphere. In
the classical geometry, its modules are fibers over the initial point.
They will also parameterize quantized vector bundles in the subsequent section.
\subsection{The odd orthogonal quantum group}
\label{Sec_OrthQG}
From now on $\g$ is the Lie algebra $\s\o(2n+1)$, $n\in \N$, and $\h\subset \g$ its Cartan subalgebra. An $\ad$-invariant form identifies $\g$
with $\g^*$ and $\h$ with $\h^*$. Let $(\> .\>,\>.\>)$ designate its restriction to $\h$.
Denote by  $\Rm$ its root system and fix the subset of positive roots $\Rm^+$ with basis $\Pi^+$.
We normalize the inner product so that the length of  short roots is $1$.
Then $\Rm^+$ contains an orthogonal basis $\{\ve_i\}_{i=1}^n\subset \h^*$
of short roots such that $\al_1=\ve_1, \al_{i}=\ve_{i}-\ve_{i-1}$, $i=2,\ldots, n$, constitute $\Pi^+$.
For all $\la\in \h^*$ we denote by  $h_\la$ the element of $\h$ such that $\mu(h_\la)=(\mu,\la)$ for arbitrary $ \mu\in \h^*$.

By $U_q(\g)$ we understand the standard  orthogonal quantum group \cite{ChP} over the complex field
 with the set of generators $e_\al$, $f_\al$, and invertible $q^{h_\al}$ satisfying
$$
q^{\pm h_\al}e_\bt=q^{\pm (\al,\bt)}e_\bt q^{\pm h_\al},
\quad
[e_\al,f_\bt]=\dt_{\al,\bt}[h_\al]_q,
\quad
q^{\pm h_\al}f_\bt=q^{\mp (\al,\bt)}f_\bt q^{\pm h_\al},\quad \al,\bt \in \Pi^+.
$$
The generators $e_{\al}$ and  $f_\al$, $\al \in \Pi^+$, obey the Serre relations,
$$
[e_{\al},[e_{\al},e_{\bt}]_q]_{\bar q}=0, \quad [f_{\al},[f_{\al},f_{\bt}]_q]_{\bar q}=0, \quad \forall \al,\bt\in \Pi^+\quad\mbox{s.t.}\quad
 \frac{2(\al,\bt)}{(\al,\al)}=-1,
$$
$$
[e_{\al},e_{\bt}]=0,\quad[f_{\al},f_{\bt}]=0,\quad \forall \al,\bt\in \Pi^+\quad\mbox{s.t.}\quad
 (\al,\bt)=0,
$$
$$
[e_{\al_1},\hat e_{\dt}]=0, \quad [f_{\al_1},\hat f_{\dt}]=0,
$$
where $\hat e_{\dt}=[e_{ \al_1},[e_{\al_1},e_{\al_2}]_q]_{\bar q}$ and
 $\hat f_{\dt}=[f_{ \al_1},[f_{\al_1},f_{\al_2}]_q]_{\bar q}$.
Note that $\hat e_{\dt}$ and $\hat f_{\dt}$ are deformations of classical root vectors.
Although they do not form a quantum $\s\l(2)$-pair,  they  play a role in what follows.

Fix the comultiplication on the generators of  $U_q(\g)$ as
$$\Delta(f_\al)= f_\al\tp 1+q^{-h_\al}\tp f_\al,\quad\Delta(q^{\pm h_\al})=q^{\pm h_\al}\tp q^{\pm h_\al},\quad\Delta(e_\al)= e_\al\tp q^{h_\al}+1\tp e_\al.$$
Then the antipode $\gm$ acts by the assignment
$$\gm( f_\al)=- q^{h_\al}f_\al, \quad \gm( e_\al)=- e_\al q^{-h_\al}, \quad \gm(q^{\pm h_\al})=q^{\mp h_\al}.$$
 The counit $\eps$ returns $\eps(e_\al)=0=\eps(f_\al)$, and $\eps(q^{\pm h_\al})=1$.

Denote by $U_q(\h)$,  $U_q(\g_+)$, and  $U_q(\g_-)$  the subalgebras generated by $\{q^{\pm h_\al}\}_{\al\in \Pi^+}$, $\{e_\al\}_{\al\in \Pi^+}$, and $\{f_\al\}_{\al\in \Pi^+}$, respectively.
The Lie subalgebra $\l=\g\l(n)\supset \h$ with the basis of simple roots $\Pi^+_\l=\{\al_i\}_{i=2}^n$ is a Levi subalgebra in $\g$.
Its universal enveloping algebra is quantized as a quantum subgroup $U_q(\l)\subset U_q(\g)$.
On the contrary, the Lie subalgebra $\k=\s\o(2n)\subset \g$ with the basis of simple roots $\Pi^+_\k=\{\dt,\al_2,\ldots,\al_n\}$ does
not have a natural quantum analog of $U(\k)\subset U(\g)$.

For each $\al\in \Rm^+$ we denote by $\g^{(\al)}$ the corresponding  $\s\l(2)$-subalgebra in $\g$. If $\al$ is simple,
then $U_q(\g^{(\al)})$ is a quantum subgroup in $U_q(\g)$. For compound $\al$,
there is a triple of elements (not unique) in $U_q(\g)$ generating a $U_q\bigl(\s\l(2)\bigr)$-subalgebra (but not a Hopf one),
which is a deformation of $U(\g^{(\al)})$.
Its generators of weights $\pm \al$ enter a Poincare-Birkhoff-Witt (PBW) system delivering a basis in $U_q(\g_\pm)$, \cite{ChP}. We denote this subalgebra by $U_q(\g^{(\al)})$
assuming its root vectors fixed.

Given a diagonalizable $U_q(\h)$-module $W$ with weight subspaces $W[\mu]$, denote by $\Char(W)$ the formal sum $\sum_{\mu} \dim W[\mu] q^\mu$, where
$q^\mu$ is a one-dimensional representation of $U_q(\h)$ acting by $q^\mu\colon q^{h_\al}\mapsto q^{(\mu,\al)}$.
We also use this definition for classical $\h$-modules (replacing $e^\mu$ with $q^\mu$) with the obvious definition of weight subspaces.

For two $U_q(\h)$-modules $W_i$, $i=1,2$, we write $\Char(W_1)\leqslant \Char(W_2)$  if $\dim W_1[\mu] \leqslant \dim W_2[\mu]$
for all $\mu\in \h^*$.
The usual rules for characters hold true:
given a short exact sequence $0\to W_1\to W_2\to W_3\to 0$ one has $\Char(W_2)=\Char(W_1)+\Char(W_3)$.
Obviously $\Char(W_1)\leqslant \Char(W_2)$ and $\Char(W_3)\leqslant\Char(W_2)$.
Also, $\Char(W_1\tp W_2)=\Char(W_1)\Char(W_2)$ for any $W_1$ and $W_2$.

Recall that finite-dimensional $U_q(\g)$-modules are  $U_q(\h)$-diagonalizable, and the eigenvalues of $q^{h_\al}$
belong to $\pm q^{\Z}$. Throughout the text, we mean by finite-dimensional $U_q(\g)$-modules only those
whose $q^{h_\al}$-eigenvalues belong to $q^{\Z}$ and denote their category by $\Fin_q(\g)$. Such modules are in one-to-one
correspondence with finite-dimensional $U(\g)$-modules
and have the same characters. This weight convention does not apply to
finite-dimensional $U_q(\l)$-modules and infinite-dimensional $U_q(\g)$-modules.

The algebra $U_q(\g)$ and its modules appearing in this exposition rationally depend on the parameter $q$
which can be specialized to complex numbers except for a certain set of values.
By {\em all} $q$ we mean all $q$ that are not a root of unity. Saying {\em almost all}  we
exclude a finite set of $q$ distinct from $1$.
We say that a property holds for {\em generic} $q$ if it is true for  almost all $q$
when restricted to every finite-dimensional subspace.
That can be formalized by passing to the local ring of rational functions regular at $q=1$.

\subsection{Extremal twist and complete reducibility of tensor products}
Let us  recall the construction of extremal twist, which is responsible for irreducible decomposition of tensor product
 $V\tp Z$ of two highest weight modules \cite{M3}.

Consider an irreducible highest weight  $U_q(\g)$-module $Z$ as a  $U_q(\g_-)$-module and denote by  $I^-_Z$ the left
ideal in $U_q(\g_-)$ that annihilates the highest vector. Denote by  $I^+_Z\subset U_q(\g_+)$
the left ideal $\si(I^-_Z)$, where $\si$ is an involutive automorphism of $U_q(\g)$ defined on the generators by
the assignment $e_\al\to f_\al$, $f_\al\to e_\al$,  $q^{\pm h_\al}\to q^{\mp h_\al}$ for all $\al \in \Rm^+$.

Now let $V$ and $Z$ be a pair of irreducible $U_q(\g)$-modules of highest weights $\nu$ and $\zt$,
respectively, and highest vectors $1_\nu$ and $1_\zt$. Denote by $V^+_Z$ the kernel of the left ideal $I^+_Z$ in $V$ and
similarly define $Z^+_V$. There are linear isomorphisms between $V^+_Z$, $Z^+_V$, and the subspace $(V\tp Z)^+$ spanned by
singular vectors in $V\tp Z$. To describe this correspondence, present $u\in (V\tp Z)^+$ as $u=1_\nu\tp z_{\mu-\nu}+\ldots + v_{\mu-\zt}\tp 1_\zt$,
where the terms with factors of other weights are suppressed. The isomorphisms are given by the assignments $\bar \dt_l(u)=v_{\nu-\zt}$
and
$\bar \dt_r(u)=v_{\mu-\nu}$. We denote by $\dt_l$ and $\dt_r$ their inverse isomorphisms.

Let $\omega$ denote  an  anti-algebra involution on $U_q(\g)$
defined as $\gm^{-1}\circ \si$. This map preserves the comultiplication on $U_q(\g)$.
Choose a weight basis in $V$ and present a singular vector $u$ as $u=\sum_i v_i\tp f_i 1_\zt$ for some $\{f_i\}\subset U_q(\g_-)$.
With $v=\bar \dt_l(u)$, define a map $\theta_{V,Z}(v)\colon V\to V/\omega(I^+_Z)V$ sending
 $v$ to the image of $\sum_i\gm^{-1}(f_i)(v_i)$ in  $V/\omega(I^+_Z)V$.
This map is independent of the choice of $f_i$ in the  presentation of $u$ and called extremal twist.

Define a canonical bilinear  form on $V\tp Z$ as the product of the $\omega$-contravariant  forms on $V$ and $Z$
(such forms do exist, are unique up to a scalar multiplier and are non-degenerate).
As $\omega$ is a coalgebra map, the canonical form  is also contravariant.
If $\langle \>.\>,\>.\>\rangle$ is  the contravariant form on $V$, then the form
$\langle \theta_{V,Z}(\>.\>),\>.\>\rangle$ is the pullback of the canonical form
via the isomorphism $\delta_l\colon V^+_Z\to (V\tp Z)^+$.
Similarly one can consider   $\theta_{V,Z}$ instead of $\theta_{Z,V}$ due to the obvious symmetry between $V$ and $Z$.

\begin{thm}[\cite{M3}]
\label{com_red_crit}
The following are equivalent
\begin{enumerate}
  \item[i)]  $V\tp Z$ is completely reducible.
  \item[ii)] $\theta_{V,Z}$ is bijective.
  \item[iii)] All highest weight submodules in $V\tp Z$ are irreducible.
  \item[iv)] $V\tp Z$ is the sum of submodules of highest weight.
\end{enumerate}
\end{thm}
\noindent
Practically we use  ii) and iii) in order to check i); the criterion iv) is included for completeness,

As an example, consider the case whith $\g=\s\l(2)$ and $Z$  a Verma module of highest weight $\la$.
Choose a vector $v=f^m_\al 1_\nu\in V$ of weight $\xi=\nu-m \al$.
It is easy to check that
\be
\theta_{V,Z}(v)\propto \prod_{k=1}^{m}\frac{[(\la+\rho+\xi,\al^\vee)+k]_{q_\al}}{[(\la+\rho,\al^\vee)-k]_{q_\al}}v,
\label{theta_sl2}
\ee
This can be done via presenting $\dt_l(v)$ as $\Fc(v\tp 1_\zt)$, where $\Fc=1\tp 1+\ldots \in U_q(\g_+)\hat \tp U_q(\g_-)$
is a lift of the (unique)
invariant element ${}^*\!1_{-\zt}\tp 1_\zt+ \ldots \in {}^*\!Z\hat\tp Z$. This lift can be easily computed
for the $\s\l(2)$-case. For $Z$ a Verma module, $\theta_{V,Z}$ coincides with $\gm^{-1}(\Fc_2)\Fc_1$ represented in $\End(V)$. The latter can be expressed through
the operators $\pi(s)$, then (\ref{Zhb_fact}) translates to (\ref{theta_sl2}).
\subsection{Description of base  module  and its generalized extremal spaces}
The base module $M$ for quantum $\Sbb^{2n}$ has a PBW basis that makes it isomorphic to the vector space of polynomials in $n$ variables.
To describe this basis, we need root vectors of weights $-\ve_i$ for $i=1,\ldots,n$.
Define
$e_{\ve_{1}}=e_{\al_{1}}$ and  $f_{\ve_{1}}=f_{\al_{1}}$ and proceed recurrently for $i>1$ as
$$
e_{\ve_{i+1}}=[e_{\al_{i+1}},e_{\ve_i}]_{q}, \quad
f_{\ve_{i+1}}=[f_{\ve_i}, f_{\al_{i+1}}]_{\bar q}
.
$$
Fix $\la\in \h^*$ by the condition $q^{2(\la,\ve_i)}=-q^{-1}$ for all $i=1,\ldots, n$  and $(\al_i, \la)=0$ for $i>1$.
We call it base weight.
The Verma module  $\hat M_\la$ of highest weight $\la$
with the canonical generator $1_\la$ has singular vectors $f_{\al_i}1_\la$ with  $i>1$ and $\hat f_{\dt}1_\la$.
Define  $M$ as the quotient of $\hat M_\la$ by the sum of submodules generated by these vectors.
As proved in \cite{M1},  it  has a basis of weight vectors $ f_{\ve_1}^{m_1}\ldots f_{\ve_{n}}^{m_{n}}1_\la$ where $m_i$ take all
possible values in $\Z_+$ (we use the same notation for the image of $1_\la$ in $M$). In particular,
$\Char(M)=\prod_{i=1}^{n}(1-q^{-\ve_i})^{-1}q^\la$.

\begin{lemma}
The quantum group $U_q(\g)$ acts on $M$ by
\be
e_{\al_i} f_{\ve_1}^{m_1}\ldots f_{\ve_{n}}^{m_{n}}1_\la
&\propto &f_{\ve_1}^{m_1}\ldots f_{\ve_{i-1}}^{m_{i-1}+1}f_{\ve_i}^{m_i-1} \ldots f_{\ve_{n}}^{m_{n}}1_\la,
\nn\\
f_{\al_i} f_{\ve_1}^{m_1}\ldots f_{\ve_n}^{m_n}1_\la
&\propto &
f_{\ve_1}^{m_1}\ldots f_{\ve_{i-1}}^{m_{i-1}-1} f_{\ve_{i}}^{m_{i}+1}
\ldots f_{\ve_n}^{m_n}1_\la,\quad i=1,\ldots, n,
\nn
\ee
where the suppressed numerical factors are non-zero once  $q$ is not a root of unity.
\label{action on M}
\end{lemma}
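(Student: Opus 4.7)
The plan is to exploit the PBW basis result cited from \cite{M}: every weight subspace of $M$ is one-dimensional, with $M_{\la-\sum_j m_j\ve_j}$ (for $m_j\in\Z_+$) spanned by the corresponding ordered monomial. Since $e_{\al_i}$ and $f_{\al_i}$ are weight vectors of weights $\pm\al_i=\pm(\ve_i-\ve_{i-1})$ (setting $\ve_0=0$), their image on a PBW basis vector lies in the one-dimensional weight subspace spanned by exactly the monomial indicated in the statement. The lemma therefore reduces to verifying that the proportionality constant is non-zero when $q$ is not a root of unity.

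To extract the constant I would push the generator through the monomial using identities derived from the recursion $f_{\ve_{i+1}}=[f_{\ve_i},f_{\al_{i+1}}]_{\bar q}$ and the standard relations of $U_q(\g)$. Orthogonality of $\al_i$ and $\al_k$ for $|i-k|\geq 2$ gives $[f_{\al_i},f_{\ve_j}]=0$ for $j\leq i-2$; induction on $j$ using $[e_{\al_i},f_{\al_{i-1}}]=0$ extends this to $[e_{\al_i},f_{\ve_j}]=0$ for all $j\leq i-1$. The essential nontrivial rules are
\[
f_{\al_i}f_{\ve_{i-1}}=qf_{\ve_{i-1}}f_{\al_i}-qf_{\ve_i},
\qquad
[e_{\al_i},f_{\ve_i}]=-q^{-h_{\al_i}-1}f_{\ve_{i-1}},
\]
the first being a one-line rearrangement of the definition, the second following from $[e_{\al_i},f_{\al_i}]=[h_{\al_i}]_q$ together with $q^{h_{\al_i}}f_{\ve_{i-1}}=qf_{\ve_{i-1}}q^{h_{\al_i}}$ (consequence of $(\al_i,\ve_{i-1})=-1$).

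Now apply $f_{\al_i}$ (resp.\ $e_{\al_i}$) to $v=f_{\ve_1}^{m_1}\cdots f_{\ve_n}^{m_n}1_\la$: the generator is crossed freely past the initial $i-2$ (resp.\ $i-1$) blocks, and iteration of the nontrivial rule through $f_{\ve_{i-1}}^{m_{i-1}}$ (resp.\ $f_{\ve_i}^{m_i}$) produces a leading contribution equal to the claimed PBW monomial with a $q$-integer prefactor proportional to $[m_{i-1}]_q$ (resp.\ $[m_i]_q$), together with a residual in which the generator survives and continues rightward. By one-dimensionality of the target weight space, the residual and any intermediate commutator contributions it spawns must collapse into a scalar multiple of the same PBW basis vector; in particular, every path in which the generator eventually reaches $1_\la$ contributes zero, since $e_{\al_i}1_\la=0$ by the highest-weight property and $f_{\al_i}1_\la=0$ in $M$ for $i\geq 2$ by the defining quotient relations. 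The case $i=1$ is handled separately but is immediate: $f_{\al_1}=f_{\ve_1}$ shifts $m_1\mapsto m_1+1$ tautologically, and $e_{\al_1}$ acts through the $\s\l(2)$-triple at $\al_1$.

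The main obstacle is to confirm that the combined scalar does not vanish at generic $q$: direct iteration produces several $q$-number factors whose possible cancellation must be excluded. I would handle this by testing the identity first on a minimal sub-monomial of the form $f_{\ve_{i-1}}^{m_{i-1}}f_{\ve_i}^{m_i}1_\la$ (all other $m_j=0$), where the two boxed relations produce the prefactor as an explicit product of $q$-integers and quantum Cartan numbers $[h_{\al_i}+c]_q$ evaluated on the explicit weight. The assumptions $q^{2(\la,\ve_1)}=-q^{-1}$ and $(\la,\al_i)=0$ for $i>1$ prevent any of these factors from vanishing while $q$ is not a root of unity, and one-dimensionality of the weight spaces lifts the sample computation to the general monomial.
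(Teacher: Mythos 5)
Your high-level strategy matches the paper's: one-dimensionality of the weight spaces of $M$ reduces the lemma to non-vanishing of a single proportionality constant, and one then tries to control that constant by pushing $e_{\al_i}$ or $f_{\al_i}$ rightward through the PBW monomial. The commutation relations you set up (vanishing of $[e_{\al_i},f_{\ve_j}]$ for $j\leq i-1$ and of $[f_{\al_i},f_{\ve_j}]$ for $j\leq i-2$, the recursion $f_{\al_i}f_{\ve_{i-1}}=qf_{\ve_{i-1}}f_{\al_i}-qf_{\ve_i}$, and a formula for $[e_{\al_i},f_{\ve_i}]$) are essentially the right ingredients, modulo a small error: the correct identity is $[e_{\al_i},f_{\ve_i}]=-q^{-h_{\al_i}}f_{\ve_{i-1}}$ for $i>1$ rather than $-q^{-h_{\al_i}-1}f_{\ve_{i-1}}$.

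The genuine gap is in your final paragraph. You propose to evaluate the scalar on the sub-monomial $f_{\ve_{i-1}}^{m_{i-1}}f_{\ve_i}^{m_i}1_\la$ and then claim that ``one-dimensionality of the weight spaces lifts the sample computation to the general monomial.'' One-dimensionality only tells you the image is a scalar times the target basis vector; it says nothing about the value of that scalar, which does in general pick up contributions from the factors $f_{\ve_j}^{m_j}$ with $j\notin\{i-1,i\}$. To carry your scalar through those blocks you need the relations $f_{\ve_{j+1}}f_{\ve_j}=q^{-1}f_{\ve_j}f_{\ve_{j+1}}$ \emph{as operators on} $M$. These do not hold in $U_q(\g_-)$; already for $n=2$ one has $[f_{\al_1},f_{\ve_2}]_q=\hat f_\dt$ in $U_q(\g_-)$, so the $q$-commutation of $f_{\ve_1}$ and $f_{\ve_2}$ in $\End(M)$ is exactly the nontrivial assertion that $\hat f_\dt$ (and its higher-rank analogues) annihilate all of $M$, not merely $1_\la$. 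The paper secures this by citing the explicit $f_{\al_i}$-action formulas of Lemma~3.2 of \cite{M} and deducing the $q$-commutation as a corollary, after which the $e_{\al_i}$-coefficients follow by the kind of iteration you sketch. Your proposal, by contrast, never establishes these operator relations (nor the explicit $f$-action that underlies them), so the reduction to the sub-monomial is unjustified and the argument does not close. Either import the explicit $f$-action from \cite{M} as the paper does, or carry out a genuinely closed-form computation of the coefficient (which would in particular have to prove the $q$-commutation independently).
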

\begin{proof}
  By weight arguments the action has the specified form up to numerical factors. That they are not zero can be
  proved as follows. First of all, they are given explicitly for the $f_{\al_i}$-action in
  Lemma 3.2 in \cite{M1}.  One can check that $f_{\ve_{i}}$  satisfy the relation
  $f_{\ve_{i+1}}f_{\ve_{i}}= q^{-1}f_{\ve_{i}}f_{\ve_{i+1}}$ as operators in  $\End(M)$.
  Using these relations, one can easily prove that  the factors arising from the $e_{\al_i}$-action are non-zero too.
\end{proof}
\noindent
Remark that irreducibility of $M$ readily follows from these formulas since all weights in $M$ are multiplicity free,
and $M$ clearly has no singular vectors. Alternatively, that can be proved via the contravariant  form
on $M$, which  is found to be non-degenerate in \cite{M1}.

Denote by  $\rho\in \h^*$  the half-sum of positive roots and put $\al^\vee=\frac{2\al}{(\al,\al)}$ for all $\al \in \h^*$.
Fix a finite-dimensional $U_q(\g)$-module $V$  of highest weight $\nu$ and introduce $\ell_i=(\nu+\rho,\al_i^\vee)-1\in \Z_+$
for $i=1,\ldots, n$.
It is known that $V$  is a quotient of the Verma module $\hat M_\nu$ by the submodule $I^-_V1_\nu$, where $I^-_V$ is a left
ideal in $U_q(\g_-)$ generated by $\{f_{\al_i}^{\ell_i+1}\}_{i=1}^n$, \cite{Jan}. Set $I^+_V=\si(I^-_V)$. Then
singular vectors in $V\tp M$ are parameterized by $M^+_V=\ker I^+_V\subset M$, as explained above.

\begin{propn}
For all $V$, the contravariant form on $M$ is non-degenerate when restricted to $M^+_V$.
The module $M$ splits into the orthogonal sum $M=M^+_V\op \omega(I^+_V)M$
with
$$
M^+_V =\Span\{ f_{\ve_1}^{m_1}\ldots f_{\ve_{n}}^{m_{n}}1_\la\}_{m_1\leqslant \ell_1, \ldots, m_n\leqslant \ell_n },\quad
\omega(I^+_V)M =\Span\{ f_{\ve_1}^{k_1}\ldots f_{\ve_{n}}^{k_{n}}1_\la\}_{k_1, \ldots, k_n},
$$
where  $k_i>\ell_i$ for some $i=1,\ldots,n$.
\label{non-degen}
\end{propn}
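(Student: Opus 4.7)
The plan is to pin down $M^+_V$ and $\omega(I^+_V)M$ as explicit subspaces of the PBW basis of $M$ using Lemma \ref{action on M}, and then to exploit contravariance of the Shapovalov form to transfer its known non-degeneracy on $M$ to non-degeneracy on the restriction.

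First I would observe that since $I^+_V=\si(I^-_V)$ and $I^-_V\subset U_q(\g_-)$ is the left ideal generated by $f_{\al_i}^{\ell_i+1}$, $i=1,\ldots,n$, the ideal $I^+_V\subset U_q(\g_+)$ is the left ideal generated by $e_{\al_i}^{\ell_i+1}$, so that
$$
M^+_V \;=\; \bigcap_{i=1}^n \ker\bigl(e_{\al_i}^{\ell_i+1}\colon M\to M\bigr).
$$
Lemma \ref{action on M} says that $e_{\al_i}$ sends a PBW monomial with exponents $(m_1,\ldots,m_n)$ to a non-zero multiple of the one with exponents $(\ldots,m_{i-1}+1,m_i-1,\ldots)$ when $m_i\geqslant 1$, and to zero when $m_i=0$. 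Iterating, $e_{\al_i}^{\ell_i+1}$ kills a monomial precisely when $m_i\leqslant\ell_i$; intersecting over $i$ produces the advertised description of $M^+_V$. Dually, since $\omega$ is an anti-algebra automorphism, $\omega(I^+_V)$ is the \emph{right} ideal in $U_q(\g_-)$ generated by $f_{\al_i}^{\ell_i+1}$, whence $\omega(I^+_V)M=\sum_i f_{\al_i}^{\ell_i+1}M$. The same Lemma applied to $f_{\al_i}$ identifies $f_{\al_i}^{\ell_i+1}M$ with the span of PBW monomials whose $i$-th exponent exceeds $\ell_i$; summing over $i$ yields the claimed description of $\omega(I^+_V)M$. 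Since the two spans partition the PBW basis of $M$, they realize the required vector-space direct sum $M=M^+_V\oplus\omega(I^+_V)M$.

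Finally, I would invoke the contravariance of the Shapovalov form, $\langle um,m'\rangle=\langle m,\omega(u)m'\rangle$ for $u\in U_q(\g)$: for $m\in M^+_V$ and $u\in I^+_V$ the left-hand side vanishes since $um=0$, so $M^+_V\perp\omega(I^+_V)M$. If $m\in M^+_V$ pairs trivially with the whole of $M^+_V$, then by orthogonality it also pairs trivially with $\omega(I^+_V)M$, and hence with all of $M$; non-degeneracy of the contravariant form on $M$ (established in \cite{M}) then forces $m=0$. The only point requiring care is the book-keeping around $\omega$ being anti-algebra, which turns the left ideal $I^+_V\subset U_q(\g_+)$ into the right ideal $\omega(I^+_V)=\sum_i f_{\al_i}^{\ell_i+1}U_q(\g_-)$; once this is settled, the remainder of the argument follows mechanically from the explicit actions recorded in Lemma \ref{action on M}.
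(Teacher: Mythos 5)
Your proposal is correct and follows essentially the same route as the paper: identify $\omega(I^+_V)M=\sum_i f_{\al_i}^{\ell_i+1}M$, use Lemma \ref{action on M} to read off both subspaces in the PBW basis, then combine contravariance (giving $M^+_V\perp\omega(I^+_V)M$) with non-degeneracy of the Shapovalov form on the irreducible module $M$. The extra book-keeping you supply about $\omega$ turning the left ideal into a right ideal is exactly the implicit content of the paper's opening ``Observe that'' sentence.
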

\begin{proof}
Observe that $\omega(I^+_V)M=\sum_{i=1}^{n}f_{\al_i}^{\ell_i+1}M$, which proves the right equality. The
left equality follows from Lemma \ref{action on M}. Since $\omega(I^+_V)M$ is orthogonal to $M^+_V$ and the form
is non-degenerate due to irreducibility of $M$, the proposition is proved.
\end{proof}

%Proposition \ref{non-degen} isolates a special case when an irreducible module $Z$  splits into a direct sum $Z=Z^+_V\op %\im\>\omega(I^+_V)$. This holds true if and only if the contravariant form on $Z$ (unique up to a scalar multiplier) is non-degenerate %when restricted to $Z^+_V$, and the sum is orthogonal. Then We call such $Z^+_V$ non-degenerate. Then we can identify  %$\mathrm{coker}\>\omega(I^+_V)$ with $Z^+_V$, and regard $\theta_{Z,V}$ as a linear operator on $Z^+_V$.

Singular vectors in $V\tp M$ can be alternatively parameterized by the subspace $V^+_M\subset V$, which is the joint
kernel of $\{e_{\al_i}\}_{i=2}^n$ and $\hat e_\dt$.
The weight subspaces in $V^+_Z$ have dimension $1$ and carry  weights
$\nu-\sum_{i=1}^{n}m_i\ve_i$, with $0\leqslant m_i\leqslant \ell_i$. In the classical limit,
$V^+_M$ is spanned by  highest vectors of irreducible $\k$-modules, according to the Gelfand-Zeitlin reduction.

Consider, for example, the case of $\g=\s\o(5)$ and the $U_q(\g)$-module $V$ of highest weight $\nu=3\nu_1+2\nu_2$, where
$\nu_i$ are fundamental weights, $(\nu_i, \al_j^\vee)=\dt_{ij}$, $i,j=1,2$. The subspace $M^+_V$ is spanned
by $f_{\ve_1}^{m_1}f_{\ve_2}^{m_2}1_\la$ with $m_1\leqslant 3$ and $m_2\leqslant 2$. Its reciprocal space $V^+_M$
is spanned  by vectors of weights $\nu-m_1\ve_1-m_2\ve_2$ of multiplicity $1$, which are depicted on the
weight lattice of $V$ by the fat circles:
$$
\begin{picture}(180,180)
\put(20,100){\vector(1,0){160}}
%\put(0,170){$V=(3,2)$}
\put(115,160){$\scriptstyle{V^+_M} $}
\put(100,20){\vector(0,1){160}}
\multiput(71,170)(20,0){4}{\circle*{4}}
\multiput(71,150)(20,0){4}{\circle*{4}}
\multiput(71,130)(20,0){4}{\circle*{4}}
\multiput(70,110)(20,0){4}{\circle*{2}}
\multiput(70,90)(20,0){4}{\circle*{2}}
\multiput(70,70)(20,0){4}{\circle*{2}}
\multiput(70,50)(20,0){4}{\circle*{2}}
\multiput(70,30)(20,0){4}{\circle*{2}}
\multiput(50,150)(0,-20){6}{\circle*{2}}
\multiput(30,130)(0,-20){4}{\circle*{2}}
\multiput(150,150)(0,-20){6}{\circle*{2}}
\multiput(170,130)(0,-20){4}{\circle*{2}}
\put(100,100){\vector(1,1){10}}
\put(100,100){\vector(0,1){20}}
\multiput(71,170.5)(6,0){10}{\line(1,0){4}}
\multiput(71,130.5)(6,0){10}{\line(1,0){4}}
\multiput(70.5,131)(0,6){7}{\line(0,1){4}}
\multiput(130.5,131)(0,6){7}{\line(0,1){4}}
\thicklines
\put(100,100){\vector(-1,1){20}}
\put(100,100){\vector(1,0){20}}
\put(113,90){$\al_1$}
\put(68,118){$\al_2$}
\put(114,111){$\scriptstyle{\nu_1}$}
\put(90,121){$\scriptstyle{\nu_2}$}
\put(133,173){$\scriptstyle{\nu}$}
\end{picture}
$$
An interesting problem is evaluation of the quantum reduction algebra of the pair $(\g,\k)$ or, more
exactly, of the left ideal $U_q(\g)I^+_M$. Its positive part is generated by a pair of  commuting
elements $z_{\ve_1}=e_{\ve_1}[h_{\al_2}]_q - q e_{\ve_2} f_{\al_2}$ and $z_{\ve_2}=e_{\ve_2}$. It allows to restore $V^+_M$ from
the vector of lowest weight in  $V^+_M$. Negative generators, which are unknown to us,  would deliver $V^+_M$ out of the highest
vector of $V$.

The subspace $V^+_M$ is spanned by ``singular vectors" of a ``subalgebra" $U_q(\k)\subset U_q(\g)$, which
is also unknown (and whose existence is under question). We believe that such a subalgebra does exist, maybe in an appropriate extension of $U_q(\g)$. One indication in favor of this conjecture is the presence of a coideal subalgebra, which is a quantization of the stabilizer
of a different point in $\Sbb^{2n}$, cf. Section \ref{SecSymPair}.
%If our conjecture is true, the extremal space $V^+_M$ would result in a
%Gelfand-Zetlin reduction for $U_q\bigl(\s\o(5)\bigr)$.

\subsection{Pseudo-parabolic Verma modules}
%Now we define generalized parabolic modules for the pseudo-Levi subalgebra $\k=\s\o(2n)$.
Suppose that  $\k\subset \g$ is a reductive subalgebra of maximal rank. Suppose that
its polarization is induced by the polarization of $\g$, so there are inclusions
 $\Rm^\pm_\k\subset \Rm^\pm_\g$ of their root subsystems.
Take a weight $\xi\in \h^*$ that is integral dominant with respect to $\k$
and let $X$ be the irreducible finite-dimensional $\k$-module  $X$ of highest weight  $\xi$.
For each $\al\in \Pi^+_\k$  the sum of $\xi$ and the base weight  $\la$  satisfies the condition
\be
q^{2(\xi+\la+\rho,\al)-m_\al(\al,\al)}=1,
\label{KazhKac}
\ee
where  $m_\al=(\xi,\al^\vee)+1$ a positive integer.
Then there is a  submodule $\hat M_{\xi+\la-m_\al\al}$ in the Verma module $\hat M_{\xi+\la}$, \cite{DCK}.
Let $M_{X,\la}$ denote the quotient of $\hat M_{\xi+\la}$ by the sum of $\hat M_{\xi+\la-m_\al\al}$ over all $\al\in \Pi^+_\k$.
If $\k$ were a Levi subalgebra in $\g$, that is, if $\Pi^+_\k\subset \Pi^+_\g$, then $M_{X,\la}$ would be a parabolic Verma module.
That justifies the name of pseudo-parabolic Verma module for $M_{X,\la}$ when $\Pi^+_\k\not \subset \Pi^+_\g$.

For non-Levi $\k$,  $M_{X,\la}$ is factored through a parabolic module $M_{\check X_\la}$
relative to the maximal  Levi subalgebra $\l$ among those sitting inside $\k$.
The finite-dimensional $U_q(\l)$-module $\check X_\la =\check X\tp \C_\la$ is of the same highest weight $\xi+\la$.
 As a $U_q(\l)$-module,  $M_{\check X_\la}$ is isomorphic to $U_q(\n_-)\tp \check X_\la$, where $U_q(\n_-)\subset U_q(\g)$
 is a subalgebra  invariant under the
 adjoint action of $U_q(\l)$. It is a deformation of $U(\n_-)$, where $\n_-=\g_-\ominus \l_-$, see Section \ref{loc_fin_parabolic} for details.

In the case of concern,
$\l=\g\l(n)$, and $M_{X,\la}$ is a quotient of $M_{\check X_\la}$ by the submodule generated by
the image of  $\pi_{\al_1}(2h_\al+1)f_\dt^m1_{\xi+\la}$ with $m=m_\dt$. By Proposition \ref{singular}, it
 is singular  unless it is zero.
\begin{lemma}
 For all $m\in \Z_+$, the vector $\pi_{\al_1}(2h_\al+1)f_\dt^m1_{\xi+\la}$  does not vanish in $M_{\check X_\la}$.
\end{lemma}
\begin{proof}
  We can assume that $n=2$, then $\hat f_\dt$ is central in $U_q(\g_-)$.
  It is easy to check that the ordered monomials in $f_{\al_1},\hat f_\dt, f_{\al_1+\al_2},f_{\al_2}$
  whose degree of $f_{\al_2}$ is less than $m_{\al_2}$
  deliver a basis in $M_{\check X_\la}$.
  Factorization (\ref{factorize f_dt}) implies
$$
\pi_{\al_1}(2h_\al+1)f_\dt^m1_{\xi+\la}=q^{-m}\prod_{i=0}^{m-1}\hat f_\dt(s+2i)1_{\xi+\la}=q^{-m}\hat f_{m\dt}(s)1_{\xi+\la},
$$
  where $s=2(\xi+\la,\al)-2m+2$ and $\hat f_{m\dt}(s)\in U_q(\g_-)$. Observe that $\hat f_{m\dt}(s)$ is regular as $q^{2(\la,\al_1)}=-q^{-1}$.
  Induction on $m$ proves that the right-hand side expands as $cf_\dt^m1_{\xi+\la}+\ldots$,
  where  $c\not=0$ and the omitted terms involve different basis elements. This proves the statement.
\end{proof}
Thus $M_{X,\la}$ is a quotient of  $M_{\check X_\la}$ by a proper submodule.
Since $M_{\check X_\la}$ is locally finite over $U_q(\l)$ by Proposition \ref{loc_fin_parabolic}, so is  $M_{X,\la}$.

%Let $\tilde Y_0\subset X_0$ be the classical $\l$-module of highest weight $\xi$.
\begin{corollary}
  For generic $q$, $\Char(M_{X,\la})\leqslant \Char(M)\Char(X)$.
\label{char-pseudo-parabolic}
\end{corollary}
\begin{proof} In the classical limit $q\to 1$,   the  vector $\hat f_{m\dt}(s)$ goes over to
 $f_\dt^m$, and
  the $U_q(\g_-)$-module $M_{X,\la}$ turns into the induced module
  $U(\g_-)\tp_{U(\k_-)}X$. Therefore $\Char (M_{X,\la})\leqslant \Char(M)\Char(X)$
  for generic $q$,
  since deformation does not increase quotients.
\end{proof}

\subsection{Pseudo-parabolic category $\O(\Sbb^{2n})$}
In this section we prove that generalized parabolic modules of highest weight are irreducible, and tensor products
of the base module with finite-dimensional modules are completely reducible. We end up showing that
the corresponding subcategory in the category $\O$ is equivalent to the category of finite-dimensional $\k$-modules.

Fix a finite-dimensional $U_q(\g)$-module  $V$.  The subspace $V^+_M\subset V$ is the kernel of the left ideal $ I^+_{M}\subset U_q(\g_+)$  generated by $e_{\al_i}$, $i>1$, and by $\hat e_\dt(s)$,
with $s=2(\la,\al_1)-1$ (note that $\hat e_\dt(s)\propto \hat e_\dt \mod U_q(\g_+) e_{\al_2}$ for such $s$). In the classical limit, $V^+_M$  is the kernel of $\k_+$.

\begin{propn}
\label{homomorphism}
    For each non-zero $v\in V^{+}_M[\xi]$, the homomorphism
  $\hat M_{\xi+\la}\to  V\tp M$, $1_{\xi+\la}\mapsto \delta_l(v)$, factors through a homomorphism
  $M_{X,\la}\to V\tp M$.
\end{propn}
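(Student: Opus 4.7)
The plan is to verify that the canonical $U_q(\g)$-homomorphism $\phi\colon \hat M_{\xi+\la}\to V\tp M$ sending $1_{\xi+\la}$ to $\dt_l(v)$ annihilates the submodule $I^-_{M_{X,\la}}1_{\xi+\la}$. The map $\phi$ is well defined because $\dt_l(v)$ is a singular vector of weight $\xi+\la$, and factoring through $M_{X,\la}$ is equivalent to $\phi(I^-_{M_{X,\la}}1_{\xi+\la})=0$. Since $I^-_{M_{X,\la}}$ is generated by $f_{\al_i}^{m_{\al_i}}$ for $i>1$ together with the product $\hat f_{m_\dt\dt}(x)=\hat f_\dt(x)\cdots\hat f_\dt(x+2m_\dt-2)$ at $x=2(\la+\xi,\al_1)-2m_\dt+1$, it suffices to verify $f_{\al_i}^{m_{\al_i}}\dt_l(v)=0$ for $i>1$ and $\hat f_{m_\dt\dt}(x)\dt_l(v)=0$ in $V\tp M$.

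First I will check that both preimages are singular vectors in $\hat M_{\xi+\la}$. For $f_{\al_i}^{m_{\al_i}}1_{\xi+\la}$ with $i>1$, the Kazhdan--Kac condition $(\xi+\la+\rho,\al_i^\vee)=m_{\al_i}$ follows from $(\la,\al_i)=0$, $(\rho,\al_i^\vee)=1$, and $m_{\al_i}=(\xi,\al_i^\vee)+1$. For $\hat f_{m_\dt\dt}(x)1_{\xi+\la}$, I apply Proposition \ref{singular} with $\mu=\xi+\la$ and $m=m_\dt$: the hypothesis $[(\mu,\dt)-m+2]_q=0$ reduces to $[0]_q=0$ by combining $(\la,\dt)\equiv-1$ (from $q^{2(\la,\ve_i)}=-q^{-1}$) with $m_\dt=(\xi,\dt)+1$. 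Iterating (\ref{f-pi-int}) yields a factorization $\pi_\al(s)f_\dt^{m_\dt}=c\,\hat f_{m_\dt\dt}(x)\pi_\al(s+2m_\dt)$ for a nonzero scalar $c$; evaluated on $1_{\xi+\la}$, with $\pi_\al(\cdot)$ acting as the identity on the highest weight vector, this identifies $\pi_\al(2h_\al+1)f_\dt^{m_\dt}1_{\xi+\la}$ with a nonzero multiple of $\hat f_{m_\dt\dt}(x)1_{\xi+\la}$.

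Because $\phi$ is a homomorphism, the images $f_{\al_i}^{m_{\al_i}}\dt_l(v)$ and $\hat f_{m_\dt\dt}(x)\dt_l(v)$ are themselves singular vectors (or zero) in $V\tp M$, of weights $\xi+\la-m_{\al_i}\al_i$ and $\xi+\la-m_\dt\dt$ respectively. I then appeal to the bijection $\dt_r\colon M^+_V\to(V\tp M)^+$ under which a singular vector of weight $\mu$ corresponds to an element of $M^+_V[\mu-\nu]$. By Proposition \ref{non-degen}, $M^+_V[\eta]$ is one-dimensional exactly when $\eta=\la-\sum_j m'_j\ve_j$ with $0\leqslant m'_j\leqslant\ell_j$ for all $j$, and zero otherwise.

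The argument finishes with a direct check that both target weights fall outside this admissible set. Writing $\xi=\nu-\sum_j m_j\ve_j$ with $m_j\in[0,\ell_j]$: for $i>1$, since $\al_i=\ve_i-\ve_{i-1}$ and $m_{\al_i}=\ell_i+m_{i-1}-m_i+1$, the $\ve_{i-1}$-coefficient of the shift $\sum_j m_j\ve_j+m_{\al_i}\al_i$ becomes $m_{i-1}-m_{\al_i}=m_i-\ell_i-1<0$; for the $\dt$-case, with $\dt=\ve_1+\ve_2$ and $m_\dt=\nu_1+\nu_2-m_1-m_2+1$, the $\ve_1$-coefficient becomes $m_1+m_\dt=\nu_1+\nu_2-m_2+1>2\nu_1=\ell_1$, using $m_2\leqslant\ell_2=\nu_2-\nu_1$. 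Both violate the range $[0,\ell_j]$, so the corresponding $M^+_V$ weight subspaces vanish and the images of the generators must be zero. I expect the main obstacle to be the $\dt$-case of the singular vector identification in the second step; carefully aligning the shift parameter $x$ of $\hat f_{m_\dt\dt}(x)$ with the internal parameter $s=2(\xi+\la,\al_1)-2m_\dt+1$ of Proposition \ref{singular} is required to see that the scalar $c$ relating the two presentations is nonzero.
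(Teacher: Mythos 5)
Your proof is correct but takes a genuinely different route from the paper's. The paper argues entirely inside $V$: it conjugates $v$ by the shifted extremal projector $\pi_{\al_1}$ to obtain $v_0=\pi_{\al_1}^{-1}(x+2m_\dt)v$, uses the intertwining relation (\ref{e-pi-int}) to deduce $e_\dt v_0=0$ so that $v_0$ generates a finite-dimensional $U_q(\g_\dt)$-module of appropriate highest weight, concludes $f_\dt^{m_\dt}v_0=0$, and finally transfers this back through (\ref{f-pi-int}) to get $I^-_{m_\dt\dt}v=0$; the cases $\al_i$ with $i>1$ are handled by observing $v$ generates a finite-dimensional $U_q(\g_{\al_i})$-module. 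You instead work in $V\tp M$: you show the generators of $I^-_{M_{X,\la}}$ send $1_{\xi+\la}$ to singular vectors of $\hat M_{\xi+\la}$ (via Kazhdan--Kac conditions for $\al_i$, $i>1$, and via Proposition \ref{singular} for $\dt$), so their images under the homomorphism $\phi$ are singular or zero, and then exploit the bijection $(V\tp M)^+\simeq M^+_V$ together with the box description of $M^+_V$ from Proposition \ref{non-degen} to show the target weight subspaces vanish. Both routes are sound; the paper's is self-contained in the $\pi_\al$-calculus and proves the vanishing intrinsically in $V$, whereas yours is a shorter combinatorial weight comparison but leans on the complete explicit parametrization of singular vectors in $V\tp M$ given by Proposition \ref{non-degen}. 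One cosmetic caution: when you write $(\la,\dt)\equiv -1$ you should make clear that only the identity $q^{2(\la,\dt)}=q^{-2}$ is meant (since $(\la,\ve_i)$ itself is not $-1/2$), which suffices for the $q$-bracket to vanish.
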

\begin{proof}
 Since $M$ is irreducible, the right dual $N$ to $M$ is a module of lowest weight, cf. Section B.
 It is a quotient  of the lower Verma module by the submodule generated by  $\hat e_\dt1_{-\la}$ and $e_{\al}1_{-\la}$, $\al\in \Pi^+_\l$.
 Furthermore, the singular vector $\hat f_{m\dt}(x)1_{\xi+\la}\in \hat M_{\xi+\la}$ with $m=m_\dt$ and $x=2(\xi+\la,\al)-2m+1$
 vanishes in $M_{X,\la}$. Therefore, by Corollary \ref{high-low}, $\Hom(M_{X,\la},V\tp M)$ is isomorphic to $V^+_M[\xi] \cap \ker \hat f_{m\dt}(x)$.
 The statement will be proved if we show that $V^+_M[\xi] \subset \ker \hat f_{m\dt}(x)$.

%Set $y=2(\xi+\la,\ve)+1=x+2m$.
The operator $\pi_\al(x+2m)$ is invertible on $V$ by Proposition \ref{pi-invertible}, so put $v_0=\pi_\al^{-1}(x+2m)v$.
Similarly, the operator $\pi_\al(2h_\al+s)$, with $s=2(\la,\al)-1$ is invertible on $V$.
%Observe that $\pi_\al(x+2m)v_0=\pi_\al(2h_\al+s+2)v_0.$
Then, by (\ref{e-pi-int}),
$$0=\hat e_\dt(s)\pi_\al(x+2m)v_0=
\hat e_\dt(s)\pi_\al(2h_\al+s+2)v_0
=
q^{-1}\pi_\al(2h_\al+s) e_\dt v_0,
$$
hence $e_\dt v_0=0$. The vector $v_0$ generates a $U_q(\g^{(\dt)})$-submodule with $f^m_\dt v_0=0$.
Then, by (\ref{f-pi-int}),
$\hat f_{m\dt}(x)\pi_\al(x+2m)v_0=
\pi_\al(x) f_\dt^{m}v_0=0$ as required.
\end{proof}

Denote by $M'_{X_i,\la}$ the image of the module $M_{X_i,\la}$ in $V\tp M$, then $\Char(M'_{X_i,\la})\leqslant \Char(M_{X_i,\la})$.
We aim to prove that $V\tp M$ is a direct sum of  $M'_{X_i,\la}$ and essentially $M'_{X_i,\la}=M_{X_i,\la}$.
The proof will be done by induction on the rank of $\g$.
For each $k=1,\ldots, n$, we identify the Lie subalgebra $\g^{(k)}=\s\o(2k+1)\subset \g$ whose root basis is $\{\al_1,\ldots, \al_k\}$. The base module $M$
contains base modules for $\Sbb^{2k}$, which we denote by $M^{(k)}$. Obviously, $M^{(k)}\subset M^{(k+1)}$ for all $k<n$.

\begin{thm}
\label{thm-dir-sum}
Let  $V$ be a finite-dimensional $U_q(\g)$-module. Then
\begin{enumerate}
  \item   $V\tp M$ is completely reducible for all $q$ and
  splits into a direct sum $\op_{i}M'_{X_i,\la}$ for
  an irreducible decomposition $V=\op_i X_i$ over $\k$,
 \item all modules $M'_{X_i,\la}$ are irreducible and $\Char(M'_{X_i,\la})=\Char(M)\Char(X_i)$ for all $q$,
 \item $M_{X,\la}\simeq M'_{X,\la}$ for generic $q$.
\end{enumerate}
\end{thm}
Note that irreducibility of $M'_{X_i,\la}$ is equivalent to complete reducibility of $V\tp M$,
by Theorem \ref{com_red_crit}, iii). Below we give a full proof only for generic $q$.
For all $q$, complete reducibility of
$V\tp M$ follows from a direct calculation of $\theta_{M,V}$ that employs a relation of $\theta_{M,V}$ with extremal projectors,
in generalization of (\ref{theta_sl2}). That is based on a more advanced development of \cite{M3} and done in \cite{M5}.
\begin{proof}

  Consider the case of $n=1$. Then $\g\simeq \s\l(2)$, $\k=\h$, and $M$ is a Verma module.
  Let $\nu$ be the highest weight of $V$. The extremal twist acts on a vector $f_{\ve_1}^m1_\nu\in V$ of weight $\xi=\nu-m\ve_1$ as in (\ref{theta_sl2}), where $\al=\ve_1$.
It does not vanish once $q^{2(\la,\ve_1)}=-q^{-1}$, therefore  $V\tp M=\op_iM'_{X_i,\la}$, where all $M'_{X_i,\la}$ are irreducible
Verma modules.
This proves the theorem for  $n=1$ and all $q$.
%Since $V^+_M=V$, it is non-degenerate, and  Proposition \ref{non_deg_comp_red} proves complete reducibility for $n=1$.

Suppose that the theorem is proved for $n\geqslant 1$.
An irreducible  $U_q(\g^{(n+1)})$-module $V$ is parameterized by the highest weight $\nu=l_1\ve_1+\ldots +l_{n+1}\ve_{n+1}$ with
$0\leqslant l_i\leqslant l_{i+1}$.
Observe through the Gelfand-Zeitlin reduction that all irreducible $U_q(\g^{(n)})$-modules can be obtained by considering the special
case of $l_n=l_{n+1}$. This corresponds to the zero $n+1$-th coordinate in the expansion of $\nu$ over the fundamental weights. Then  $V^+_M$ is contained in the $U_q(\g^{(n)})$-submodule $V^{(n)}$ generated by the highest vector
of $V$, and
the corresponding singular vector lies in $V^{(n)}\tp M^{(n)}$. By the assumption, it is completely reducible over $U_q(\g^{(n)})$, therefore $\theta_{V^{(n)},M^{(n)}}$ is bijective. Since the extremal twist is independent of the choice of lift for singular vectors, $\theta_{V^{(n)},M^{(n)}}=\theta_{V,M}$, and thus
$\theta_{V,M}$ is bijective. Therefore $V\tp M$ is completely reducible for special $V$ (and for all $q$).

Furthermore, Corollary \ref{char-pseudo-parabolic} implies
$$
\Char(V\tp M)=\sum_{i}\Char(M'_{X_i,\la})\leqslant \sum_{i}\Char(M_{X_i,\la})\leqslant \sum_{i}\Char(M)\Char(X_i)=\Char(M)\Char(V)
$$
for generic $q$.
Since $\Char(V)\Char(M)=\Char(V\tp M)$, we conclude that
$$
\sum_{i}\Char(M'_{X_i,\la})=\sum_{i}\Char(M_{X_i,\la})=\sum_{i}\Char(M)\Char(X_i)
$$
and hence $M'_{X_i,\la}=M_{X_i,\la}$ for all $i$. Since $M_{X_i,\la}'$ are
rational submodules in $V\tp M$, their characters do not exceed their values at generic pont.
But they cannot be less than $\Char(M)\Char(X_i)$ either because they sum up to $\Char(V)\Char(M)$.
This proves 3) and also proves 2) for all $q$ and special $V$.

Now suppose that $V$ is arbitrary. Proposition \ref{homomorphism} implies that its all highest weight submodules in
$V\tp M$ are irreducible, for generic $q$. This  proves 1), by Theorem \ref{com_red_crit}, iii).
\end{proof}
Let us illustrate the induction transition in the proof with the upper part of the weight diagram of the module
$V=(3,2)$ for  $\g=\s\o(5)$. The weights of $V^+_M$  lie on the three horizontal
dashed lines.
$$
\begin{picture}(180,100)
\put(20,20){\vector(1,0){160}}
%\put(0,170){$V=(3,2)$}
\put(100,0){\vector(0,1){100}}
\multiput(71,90)(20,0){4}{\circle*{4}}
\multiput(71,70)(20,0){4}{\circle*{4}}
\multiput(71,50)(20,0){4}{\circle*{4}}
\multiput(70,30)(20,0){4}{\circle*{2}}
\multiput(70,10)(20,0){4}{\circle*{2}}
%\multiput(70,70)(20,0){4}{\circle*{2}}
\put(131,50){\circle{6}}\put(150,70){\circle{6}}\put(170,90){\circle{6}}
%\multiput(70,50)(20,0){4}{\circle*{2}}
%\multiput(70,30)(20,0){4}{\circle*{2}}
\multiput(50,70)(0,-20){4}{\circle*{2}}
\multiput(30,50)(0,-20){3}{\circle*{2}}
\multiput(150,70)(0,-20){4}{\circle*{2}}
\multiput(170,50)(0,-20){3}{\circle*{2}}
\put(100,20){\vector(1,1){10}}
\put(100,20){\vector(0,1){20}}
\multiput(31,90.5)(6,0){23}{\line(1,0){4}}
\multiput(51,70.5)(6,0){16}{\line(1,0){4}}
\multiput(71,50.5)(6,0){10}{\line(1,0){4}}
%\multiput(70.5,51)(0,6){7}{\line(0,1){4}}
%\multiput(130.5,51)(0,6){7}{\line(0,1){4}}
\thicklines
\put(100,20){\vector(-1,1){20}}
\put(100,20){\vector(1,0){20}}
\put(113,10){$\al_1$}
\put(68,38){$\al_2$}
\put(114,31){$\scriptstyle{\nu_1}$}
\put(90,41){$\scriptstyle{\nu_2}$}
\put(133,93){$\scriptstyle{\nu}$}
\end{picture}
$$
These lines can be obtained from three special modules of highest weights  $\ell(\ve_1+\ve_2)$ with $\ell=3,5,7$.
They are
marked on the diagram with large circles. Their weight subspaces are in $U_q(\g^{(1)})$-submodules $V^{(1)}$ of dimensions $4$, $6$, $8$, therefore  we can restrict to $U_q(\g^{(1)})$
when calculating singular vectors.

\begin{conjecture}
\em
  For any irreducible $\k$-module $X$, the module $M_{X,\la}$ is isomorphic to $M'_{X,\la}$  for all $q$.
\end{conjecture}
\noindent
Let us comment that its proof  is equivalent to computing $\Char(M_{X,\la})$.
In the light of Theorem \ref{thm-dir-sum}, 3), the inequality of Corollary \ref{char-pseudo-parabolic}
becomes equality at generic $q$ and can be rectified as $\Char(M_{X,\la})\geqslant \Char(M)\Char(X)$ for all $q$.

Recall that the category $\O$ is formed by finitely generated $U_q(\h)$-diagonalizable
$U_q(\g)$-modules that are locally finite over $U_q(\g_+)$.
Define pseudo-parabolic category $\O(\Sbb_q^{2n})$ as the full subcategory in $\O$ whose objects are submodules in $V\tp M$, where
$V\in \Fin_q(\g)$.
Theorem \ref{thm-dir-sum} can be reworded as
\begin{propn}
  $\O(\Sbb_q^{2n})$ is a semi-simple Abelian category equivalent to $\Fin(\k)$.
  It is a $\Fin_q(\g)$-module category generated by $M$.
  \label{quasi-parabolic category}
\end{propn}
\begin{proof}
  The category $\O(\Sbb_q^{2n})$ is additive semi-simple, hence it is Abelian.
  By construction, it is a module category over $\Fin_q(\g)$.

  The functor  $\O(\Sbb_q^{2n})\to \Fin(\k)$ is constructed as follows.
  Given a module $M_1$ from $\O(\Sbb_q^{2n})$ define  $\check X_\la$ to be
  its finite-dimensional $U_q(\l)$-submodule generated by the span of singular vectors in $M_1$. Let $\check  X_0 $ be
   the classical limit of $\check  X_\la\tp \C_{-\la}$ equipped with the corresponding action of $U(\l)$. Proceed
    to the $\k$-module parabolically induced  from $\check  X_0$ and then
  to its finite-dimensional quotient $X$ by the maximal submodule that has zero intersection with $\check  X_0$. These steps, apart from maybe the last one, respect morphisms. But morphisms of parabolic $\k$-modules induced from the $\l$-modules descent to morphisms of their finite-dimensional quotients. Now let $M_2$ be another module from $\O(\Sbb_q^{2n})$ and define $\check Y_\la$, $\check Y_0$, and $Y$
  accordingly.  We have
$$
  \Hom_{U_q(\h)}(M_1^+,M_2^+)=\Hom_{U_q(\h)}(\check  X_\la^+,\check  Y_\la^+)\simeq \Hom_{U(\h)}(\check  X_0^+,\check  Y_0^+)= \Hom_{U(\h)}(X^+,Y^+),
$$
where $+$ designates span of singular vectors.
These imply the isomorphisms
$$
  \Hom_{U_q(\g)}(M_1,M_2)\simeq\Hom_{U_q(\l)}(\check  X_\la,\check  Y_\la)\simeq \Hom_{U(\l)}(\check  X_0,\check  Y_0)\simeq \Hom_{U(\k)}(X,Y).
$$
The middle isomorphism is due to \cite{EK}, while the other two are consequences
of semi-simplicity.

Finally, every module from $\Fin(\k)$ is isomorphic to one constructed this way because it can be realized as a submodule of a module
    from $\Fin(\g)$.
\end{proof}
In the next section we consider two other $\Fin_q(\g)$-module categories:
equivariant projective modules over $\C_q[\Sbb^{2n}]$ and representations
of a coideal subalgebra in $U_q(\g)$.
These categories will be shown  equivalent  to $\O(\Sbb_q^{2n})$.

\section{Equivariant vector bundles over quantum spheres}
\label{SecEqVB}

In some respects, this section is analogous to the corresponding section of \cite{M4}, so we refrain
from giving detailed proofs where they repeat similar arguments of the  proofs therein.
For the sake of compatibility with \cite{M2}, we change the comultiplication in $U_q(\g)$ to
$$\Delta(f_\al)= f_\al\tp q^{-h_\al}+1\tp f_\al,\quad\Delta(q^{\pm h_\al})=q^{\pm h_\al}\tp q^{\pm h_\al},\quad\Delta(e_\al)= e_\al\tp 1+q^{h_\al}\tp e_\al, \quad \al \in \Pi^+.$$
Since the two coproducts are conjugated via an R-matrix, this modification does not affect the conclusions of the previous sections.

In the deformation context, we pass to the $\C[[\hbar]]$-extension of $U_q(\g)$ completed in the $\hbar$-adic topology, which
we denote by $U_\hbar(\g)$. Note that  the $\C[[\hbar]]$-extension of the base module
does not support the action of $U_\hbar(\g)$. Nevertheless, the $U_\hbar(\g)$-action on $\End(M)$ is well defined.

Given a $\C$-algebra $A$ we call a $\C[[\hbar]]$-algebra $A_\hbar$ quantization of $A$ if it is a free $\C[[\hbar]]$-module
and $A_\hbar/ \hbar A_\hbar$ is isomorphic to  $A$ as a $\C$-algebra.
  The quantization is called equivariant if $A_\hbar$ is a $U_\hbar(\g)$-module algebra such that the $U_\hbar(\g)$-action is a deformation of a $U(\g)$-action on $A$.
If a $U_\hbar(\g)$-algebra $A_\hbar$ is a $\C[[\hbar]]$-extension of a $U_q(\g)$-algebra $A_q$, then $A_q$ is also called (equivariant) quantization of $A$. This should not cause a confusion as the ring of scalars is always clear from the context.

Suppose that $\Gamma$ is a projective $A$-module with an $U(\g)$-action such that the  multiplication $A\tp \Gamma\to \Gamma$ is
equivariant. Suppose that $A_\hbar$ is an equivariant quantization of $A$. We call a projective left $A_\hbar$-module $\Gamma_\hbar$  quantization of $A$ if $\Gamma_\hbar$ is also a $U_\hbar(\g)$-module such that the multiplication
$A_\hbar \tp \Gamma_\hbar\to \Gamma_\hbar$ is equivariant and the action of $U_\hbar(\g)$ is a deformation of the action of $U(\g)$.
Equally we consider the version of right $A_\hbar$-modules $\Gamma_\hbar$. Our convention about indexing with $q$ instead of
$\hbar$ applies to $A_q$-modules as well.

By the Serre-Swan theorem \cite{S,Sw}, finitely generated projective $A$-modules are sections of  vector bundles in the classical algebraic geometry. Such modules over $A_\hbar$ can be regarded as quantized vector bundles.

\subsection{Projective modules over $\C_q[\Sbb^{2n}]$}

By a classical equivariant vector bundle over $\Sbb^{2n}$ with fiber $X$ we understand the (left or right)
finitely generated projective $\C[\Sbb^{2n}]$-module $\Gamma(\Sbb^{2n},X)$ of its global sections. It can be  realized as the subspace of $\k$-invariants in
$\C[G]\tp X$, where $G$  is either $SO(2n+1)$ or its simply connected  covering if $X$ is a spin representation of $\k$.

The quantum polynomial algebra  $\C_q[\Sbb^{2n}]$  is represented as a subalgebra $\Ac\subset \End(M)$, cf. \cite{M2}.
In accordance with the above definitions, quantization of an equivariant vector bundle on $\Sbb^{2n}$ is a $U_q(\g)$-equivariant deformation of
$\Gamma(\Sbb^{2n},X)$ in the class of right $\C_q[\Sbb^{2n}]$-modules. It is realized as $\hat P (V\tp \Ac)$,
where $\hat P\in \End(V)\tp \Ac$ is a $\U_q(\g)$-invariant idempotent.
% where $V$ is regarded as a {\em right} $U_q(\g)$-module as a left $\Ac$-module.
Such idempotents can be  constructed via  decomposition of $V\tp M$ due to the following fact.

\begin{propn}
Every invariant map $V\tp M\to W\tp M$ belongs to  $\Hom_\C(V,W)\tp \Ac$.
\label{Kostant}
\end{propn}
\noindent
One can prove it using the fact that $\Ac$ exhausts all of the locally finite part of the $U_q(\g)$-module $\End(M)$,
which can be checked with the help of Corollary \ref{high-low}.
This is the answer to a Kostant's problem for quantum groups, \cite{KST}.

For each weight vector $v\in V^{+}_M$ there is a projector  $\hat P\in \End(V)\tp \Ac$ to a particular copy of $M_{X,\la}'$
generated by the singular vector $\dt_l(v)\in V\tp M$.
\begin{thm}
\label{hom-bund1}
The right $\Ac$-module $\hat P(V\tp \Ac)$  is a quantization of $\Gamma(\Sbb^{2n},X)$.
\end{thm}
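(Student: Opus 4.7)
The plan is to read off everything from the decomposition of $V\tp M$ that was established in Theorem \ref{thm-dir-sum}, transport the information to the coordinate algebra $\Ac\subset \End(M)$, and finally verify that the classical limit matches the model description of $\Gamma(\Sbb^{2n},X)$.

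First I would check that $\hat P$ is really a well-defined element of $\End(V)\tp \Ac$ rather than just of $\End(V)\tp \End(M)$. By Theorem \ref{thm-dir-sum}, for generic $q$ the module $V\tp M$ splits as a direct sum of irreducible pseudo-parabolic Verma modules $M_{X_i,\la}$, so the projectors onto the individual summands are well-defined $U_q(\g)$-invariant idempotents of $\End(V\tp M)=\End(V)\tp \End(M)$. The proposition cited just before the theorem states that every such invariant projector already lies in $\End(V)\tp \Ac$, so the idempotent $\hat P$ attached to a given $v\in V^+_M$ makes sense as an element of $\End(V)\tp \Ac$.

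Next I would set up the module structure. Since $\Ac$ acts on $V\tp \Ac$ from the right by multiplication in the second tensorand and $\hat P$ acts from the left through $\End(V)\tp \Ac$, the image $\hat P(V\tp \Ac)$ is a right $\Ac$-submodule of the free module $V\tp \Ac$; being the image of an idempotent, it is a direct summand and hence projective. $U_q(\g)$-equivariance is automatic from invariance of $\hat P$ combined with equivariance of the action on $V\tp \Ac$. Thus the algebraic part of the statement is formal once the invariance and idempotency of $\hat P$ are in hand.

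The main obstacle, and the real content of the theorem, is the identification with the classical homogeneous bundle. For this I would work over the local ring of rational functions regular at $q=1$ so that $\Ac$, $M$, $V$, and the decomposition of $V\tp M$ all specialize. At $q=1$ the base module $M$ becomes a classical highest weight $\g$-module whose associated algebra $\Ac|_{q=1}\subset \End(M|_{q=1})$ is $\C[\Sbb^{2n}]$ by \cite{M2}. The summand $M_{X,\la}$ specializes to the generalized parabolic Verma module with top component $X$ (this is exactly how the character bound in the lemma preceding Proposition \ref{homomorphism} was derived), so the invariant projector $\hat P$ specializes to the classical invariant projector onto the $X$-isotypic summand of $V\tp M|_{q=1}$. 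Under the identification of $\k$-invariants in $\C[G]\tp X$ with sections $\Gamma(\Sbb^{2n},X)$ via Frobenius reciprocity, this classical projector singles out precisely the image of $\Gamma(\Sbb^{2n},X)$ inside $V\tp \C[\Sbb^{2n}]$, so $\hat P(V\tp \Ac)$ is a $U_q(\g)$-equivariant flat deformation of $\Gamma(\Sbb^{2n},X)$ as a right $\C[\Sbb^{2n}]$-module. The delicate point is the flatness: one has to argue that the rank of $\hat P$ is locally constant in $q$, which in turn follows from the character identity $\Char(M_{X,\la})=\Char(M)\Char(X)$ proved in the course of Theorem \ref{thm-dir-sum}, so no rank can jump at $q=1$. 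Once this is checked, the statement is established.
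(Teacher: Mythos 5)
The paper deliberately omits the proof of this theorem: the opening of Section~4 states that the arguments there are carried over verbatim from the corresponding section of \cite{M0}, so there is no internal proof to compare against. That said, your reconstruction is broadly sound and identifies the right ingredients — $\hat P\in\End(V)\tp\Ac$ by the proposition immediately preceding the theorem together with Theorem~\ref{thm-dir-sum}; projectivity and $U_q(\g)$-equivariance are formal; and the substance is the specialization at $q=1$. Two imprecisions are worth flagging. First, the flatness step does not need a ``rank-constancy'' argument (indeed ``rank'' is not well defined for an idempotent in $\End(V)\tp\Ac$ with $\Ac$ noncommutative): $\hat P(V\tp\Ac)$ is a direct summand of the free module $V\tp\Ac$, and since $\Ac$ is free over the local ring at $q=1$ (weight components of $M$, hence of $\Ac$, are finite free), any direct summand of a free module over a local ring is itself free, so flatness is automatic once one knows $\hat P$ is regular at $q=1$, which is precisely what ``for generic $q$'' guarantees in the paper's convention. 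Second, the phrase ``the $X$-isotypic summand of $V\tp M|_{q=1}$'' is not quite right: $M_{X,\la}$ is not an isotypic component but the generalized parabolic Verma module labeled by the $\k$-constituent $X\subset V$; the classical $\hat P$ is the invariant idempotent whose fiber over each point $g\cdot x_0\in\Sbb^{2n}$ is the projection of $V$ onto $gX$, and it is this geometric description, not isotypy, that identifies $\hat P|_{q=1}(V\tp\C[\Sbb^{2n}])$ with $\Gamma(\Sbb^{2n},X)=(\C[G]\tp X)^\k$. With these adjustments, your argument reproduces what the author has in mind (and what is spelled out for the projective-space case in \cite{M0}).
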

\begin{proof}
We can assume that $X$ is irreducible of highest weight $\xi$.
The $U_q(\g)$-module $\hat P(V\tp \Ac)$ is isomorphic to the locally finite part of $\Hom_\C(M,M_{X,\la})$.
Let us prove that it has a similar $U_q(\g)$-module structure
 as the $U(\g)$-module $\Gamma(\Sbb^{2n},X)$.

  First of all, observe that the preceding considerations can be conducted in the opposite category $\O$ of modules whose weights
  are bounded from below, leading to similar conclusions with regard to semi-simplicity {\em etc}.
  Denote by $N_{X,\la}$ the module of lowest weight  $-\xi-\la$
  that is opposite to $M_{X,\la}$. The annihilator of its lowest vector is obtained from $I^-_{M_{X,\la}}$ by the involution $\si$.
  Let $N'_{X,\la}\simeq (M_{X,\la}')^*$ stand for its irreducible image in $V^*\tp N$.

    Since $M_{X,\la}'$ and $M$ are  irreducible,
  the vector space $\Hom\bigl(W,\Hom_\C(M,M_{X,\la}')\bigr)$ is isomorphic to $\Hom\bigl(N_{X,\la}'\tp M,W^*\bigr)$ for any finite-dimensional module $W$. Using Corollary \ref{high-low}, one can show that $\Hom\bigl(N_{X,\la}'\tp M,W^*\bigr)\subset \Hom\bigl(N_{X,\la}\tp M,W^*\bigr)$
   as vector subspaces in $W^*$,  and the latter is isomorphic to $ \Hom_\k(X^*,W^*) \simeq\Hom_\k(W,X)$.
  Include $X$ in an irreducible decomposition $V=\op_i X_i$ over $\k$ and write
  $$\oplus_i\Hom\bigl((W,\Hom_\C(M,M_{X_i,\la}')\bigr)\simeq \Hom\bigl(W,\Hom_\C(M,V\tp M)\bigr)\simeq \Hom_\k(W,V).$$
 Since the rightmost term is isomorphic to $\op_i\Hom_\k(W,X_i)$, we get
 $\Hom\bigl((W,\Hom_\C(M_{X_i,\la}')\bigr)\simeq \Hom_\k(W,X_i)$ for all $X_i$ by dimensional reasons.
\end{proof}

%It is clear that a similar statement holds for right $U_q(\g)$- and $\Ac$-modules.
 Denote by $\mathrm{Proj}(\Ac,\g)$ the additive category of equivariant finitely generated projective right $\Ac$-modules.
 The morphisms in $\mathrm{Proj}(\Ac,\g)$ commute with the left action of $U_q(\g)$ and the right action of $\Ac$.
 It is a module category over $\Fin_q(\g)$, since tensor product with a module from $\Fin_q(\g)$ preserves direct sums.
\begin{propn}
  The  $\Fin_q(\g)$-module categories $\mathrm{Proj}(\Ac,\g)$ and $\O(\Sbb_q^{2n})$ are equivalent.
\label{proj-pseudo-parabolic}
\end{propn}
\begin{proof}
The functor $\O(\Sbb_q^{2n})\to \mathrm{Proj}(\Ac,\g)$ is facilitated by Proposition \ref{Kostant}:
for a submodule $M_{X,\la}\subset V\tp M_\la$ and an invariant projector $\hat P\in V\tp M\to M_{X,\la}$, the $\Ac$-module $\hat P(V\tp \Ac)$ depends only on the image of $\hat P$. Clearly every object from $\mathrm{Proj}(\Ac,\g)$ is isomorphic to one obtained this way.
All $U_q(\g)$-module homomorphisms intertwine projectors and give rise to morphisms in $\mathrm{Proj}(\A,\g)$.
Conversely, the functor $\mathrm{Proj}(\Ac,\g) \to \O(\Sbb_q^{2n})$ is implemented by the
assignment $\Gamma=\hat P(V\tp \Ac)\mapsto \Gamma\tp_\Ac M = \hat P(V\tp M)$.
%Clearly it takes  morphisms in $\mathrm{Proj}(\Ac,\g)$ to morphisms in $\O(\Sbb_q^{2n})$.
 It is straightforward to see that the functor $\O(\Sbb_q^{2n})\to \mathrm{Proj}(\Ac,\g)$
 is bijective on morphisms. It also preserves left tensor multiplication by modules from $\Fin_q(\g)$.
\end{proof}
\begin{remark}
\em
Let us comment on the star product on $\Sbb^{2n}$ constructed in \cite{M1}.
It is utilizing the isomorphisms
$\Hom(M,V\tp M)\simeq V^+_M[0]$ established in \cite{M1} by different methods.
The collection of these isomorphisms for all
irreducible $V$ describes the module structure of the trivial bundle $\Ac$ as the locally finite part of $\End(M)$.
This fact becomes  a special
case of Theorem \ref{thm-dir-sum} implying $\Hom(M_{X,\la},V\tp M)\simeq V^+_M[\xi]$, where $\xi$ is the highest weight of a $\k$-submodule
$X\subset V$ in the classical limit.
\end{remark}

In conclusion of this subsection, we compute the equivariant group $\Krm_0$  of $\Sbb^{2n}_q$.
Let $R(\g)$ denote the representation ring generated by isomorphism classes of finite-dimensional $\g$-modules.
It is known to be isomorphic to $\Z(\La)^W$, where $\La$ is the integral weight lattice,
$\Z(\La)$ the group ring, and $W$ the Weyl group \cite{FH}. The isomorphism is implemented by the  character $\Char$.
The ring $R(\k)$ has an additional structure of (left) $R(\g)$-module via the homomorphism
$R(\g)\to R(\k)$ induced by the restriction functor $\g\downarrow \k$.

The representation ring of finite-dimensional $U_q(\g)$-modules of quasiclassical type
is isomorphic to the classical representation ring $R(\g)$.
Let $\Krm_0(\Ac,\g)$ denote the Grothendieck group of the category $\Proj(\Ac,\g)$.
%Theorem \ref{thm-dir-sum} then can be formulated in the following terms
\begin{thm}
The group $\Krm_0(\Ac,\g)$ is a left $R(\g)$-module isomorphic to $R(\k)$.
\end{thm}
\begin{proof}
The isomorphism of Abelian groups readily follows from the equivalence of categories  $\Proj(\Ac,\g)\sim \O(\Sbb^{2n}_q)\sim \Fin(\k)$,
established by
Propositions \ref{quasi-parabolic category} and \ref{proj-pseudo-parabolic}.
The isomorphism of $R(\g)$-modules is a consequence of the isomorphisms $\Hom_\k(X,V)\simeq \Hom_{U_q(\g)}(M_{X,\la},V\tp M)$
for all $\k$-modules $X$ and all $U_q(\g)$-modules $V$, by Theorem \ref{thm-dir-sum}, 1).
\end{proof}
\noindent
In terms of formal characters, $\Krm_0(\Gamma_X)=\Char(X)\Char(M)$, by Theorem
\ref{thm-dir-sum}, 2).

\subsection{Coideal stabilizer subalgebra}
As for projective spaces in \cite{M4}, we give an alternative realization of quantum vector bundles  over spheres
in terms of quantum symmetric pairs.
Here we pass to the left $\Ac$-module version of vector bundles.

Let $\Tc$ denote the Hopf dual of $U_q(\g)$ that is a quantization of the function algebra on the spin group
covering $SO(2n+1)$.
It contains the quantum function algebra of the orthogonal group, which is generated by matrix coefficients
$T_{ij}$, $i,j=1,\ldots, 2n+1$, of the natural representation in $\C^{2n+1}$.
The matrix $T$  is invertible with $(T^{-1})_{ij}=\gm(T_{ij})$, where
$\gm$ stands for the antipode of $\Tc$.
There are two commuting left and right translation actions of $U_q(\g)$ on $\Tc$ expressed through the Hopf paring and the
comultiplication in $\Tc$ by
$$
h\tr a= a^{(1)}(h,a^{(2)}),\quad  a\tl h= (a^{(1)},h) a^{(2)},\quad  a\in \Tc, \quad h\in U_q(\g) .
$$
They are compatible with multiplication on $\Tc$.

Let $\Ru$ be a universal R-matrix of $U_q(\g)$.
Fix a representation of $U_q(\g)$ such that the image $R\in \End(\C^{2n+1})\tp \End(\C^{2n+1})$
of $\Ru$ is
proportional to the orthogonal R-matrix from \cite{FRT}.
The element $\Ru_{21}\Ru_{}$ commutes with the coproduct $\Delta(x)$ for all $x\in U_q(\g)$.
Its image $\Q$  in $\End(\C^{2n+1})\tp \End(M)$ is a matrix whose entries generate $\Ac$.
The matrix
$$
A=\quad
\left(\begin{array}{cccccc}
  q^{-2n}-q^{-1}& 0 & \ldots & 0 &q^{-n-\frac{1}{2}}c \\
  0 &-q^{-1}& 0 & 0 &0 \\
  \vdots & 0 &\ddots&0&\vdots\\
  0 & 0 & 0 &-q^{-1}&0  \\
  q^{-n-\frac{1}{2}}c^{-1}  & 0 & \ldots &0&0  \\
\end{array}
\right)\in \End(\C^{2n+1})
$$
with $c\in \C\backslash\{0\}$,
solves the Reflection Equation
$$
R_{21}A_1R_{12}A_2=A_2R_{21}A_1R_{12}\in \End(\C^{2n+1})\tp \End(\C^{2n+1}),
$$
where the indices mark the tensor factors. It also satisfies
other equations of the quantum sphere, cf. \cite{M2}, and defines a one-dimensional representation
$\chi\colon \Ac\to \C$, $\Q_{ij}\mapsto A_{ij}$.
The assignment $\Q\mapsto T^{-1}A T$ extends to an equivariant embedding  $\Ac\subset  \Tc$, where $\Tc$
is regarded as a $U_q(\g)$-module  under the left translation action. The character $\chi$ factors to
a composition of this embedding and the counit $\eps$.
The entries of the matrix $\Ru_{21}A_1\Ru_{12}\in \End(\C^{2n+1})\tp \U_q(\g)$ generate a left coideal subalgebra $\Bc\subset \U_q(\g)$,
such that $a\tl b=\epsilon(b)a$ for all $b\in \Bc$ and $a\in \Ac$. It  is a deformation of $U(\k')$,
where $\k'\simeq \s\o(2n)$.

Let $\hat P\in V\tp \Ac$ be an invariant idempotent.  The  projector  $P=\hat P_1\chi(\hat P_2)\in \End(V)$ commutes with $\Bc$ (see \cite{M4}).
\begin{propn}
\label{B-red}
\begin{enumerate}
  \item Every finite-dimensional right $U_q(\g)$-module $V$ is completely reducible over $\Bc$.
  \item Every irreducible $\Bc$-submodule in $V$ is a deformation of a classical $U(\k')$-submodule.
  \item Every $\Bc$-submodule in $V$ is the image of a $\Bc$-invariant projector $(\id\tp \chi)(\hat P)$, where
  $\hat P\in End(V)\tp \Ac$ is  a $U_q(\g)$-invariant idempotent.
\end{enumerate}
\end{propn}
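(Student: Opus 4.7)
The plan is to transport the tensor product decomposition of Theorem \ref{thm-dir-sum} to $V$ itself by applying the character $\chi$. By Theorem \ref{thm-dir-sum} together with the proposition preceding Theorem \ref{hom-bund1}, to each irreducible $\k$-submodule $X_i\subset V$ in the classical decomposition corresponds a $U_q(\g)$-invariant idempotent $\hat P_i\in \End(V)\tp \Ac$ projecting $V\tp M$ onto the pseudo-parabolic summand $M_{X_i,\la}$. Set $P_i=(\id\tp \chi)(\hat P_i)\in \End(V)$. Since $a\tl b=\eps(b)a$ for all $b\in \Bc$ and $a\in \Ac$ and since $\Bc$ is a left coideal in $U_q(\g)$, a standard computation (identical to the projective-space case in \cite{M0}) shows that each $P_i$ commutes with the right $\Bc$-action on $V$.

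Applying the algebra homomorphism $\id\tp \chi$ to the relations $\hat P_i\hat P_j=\dt_{ij}\hat P_i$ and $\sum_i \hat P_i=\id_V\tp 1_{\Ac}$ yields $P_iP_j=\dt_{ij}P_i$ and $\sum_i P_i=\id_V$, so $V=\op_i \im P_i$ as a direct sum of $\Bc$-stable subspaces. For parts 1 and 2 it remains to identify each $\im P_i$ with a flat deformation of the simple $U(\k')$-module $X_i$. The entire construction is defined over the local ring of rational functions in $q$ regular at $q=1$: flatness rests on the character estimate of the lemma preceding Proposition \ref{homomorphism} (shown inside the proof of Theorem \ref{thm-dir-sum} to be an equality), combined with the fact that $\Ac$ and $\Tc$ are flat deformations of their classical counterparts. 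At $q=1$ the idempotent $\hat P_i$ specialises to the classical projector $\End(V)\tp \C[\Sbb^{2n}]\to \Gamma(\Sbb^{2n},X_i)$ (cf.\ Theorem \ref{hom-bund1}), and $\chi$ becomes evaluation at the base point of $\Sbb^{2n}$ whose isotropy Lie algebra is $\k'\simeq \k$. Hence $P_i|_{q=1}$ is the projector onto the fiber $X_i$; upper semicontinuity of rank combined with $\sum_i\dim X_i=\dim V=\sum_i\dim\im P_i$ forces $\dim\im P_i=\dim X_i$ for generic $q$. Any hypothetical proper $\Bc$-invariant subspace of $\im P_i$ would then specialise at $q=1$ to a proper $U(\k')$-invariant subspace of the simple module $X_i$, which is impossible; this simultaneously yields complete reducibility and the deformation statement.

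Part 3 is a formal consequence of part 1: an arbitrary $\Bc$-submodule $W\subset V$ decomposes into simple $\Bc$-summands, each of which equals some $\im P_i$, so $W=\im\bigl((\id\tp \chi)(\hat P)\bigr)$ with $\hat P=\sum_{i\in S}\hat P_i$ the invariant idempotent indexed by the appropriate subset $S$. The main obstacle will be the specialisation argument of the middle paragraph, specifically verifying that the $\hat P_i$ form a flat family of idempotents with the expected classical limit. Everything else is a transcription of the projective-space treatment in \cite{M0}, to which the author explicitly defers.
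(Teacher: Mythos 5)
Your overall strategy matches what the paper intends: build the $U_q(\g)$-invariant orthogonal idempotents $\hat P_i$ from the decomposition of $V\tp M$ in Theorem~\ref{thm-dir-sum}, specialize via $\chi$ to obtain $\Bc$-commuting idempotents $P_i\in\End(V)$, and argue that each $\im P_i$ is a flat deformation of a simple $\k'$-module. The paper itself only says ``the proof is similar to \cite{M0},'' so you are reconstructing, and the skeleton is right. Two places need tightening.

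First, the irreducibility step. You write that a proper $\Bc$-submodule of $\im P_i$ ``would specialise at $q=1$ to a proper $U(\k')$-invariant subspace of the simple module $X_i$.'' As stated this is not a valid inference: you are mixing a submodule defined over the fraction field $\C(q)$ (or at a particular numerical $q$) with a specialization at $q=1$. A proper $\Bc(\C(q))$-submodule of $\im P_i\tp\C(q)$ need not meet the integral lattice in a sub-lattice that is again a direct summand; after saturating in the lattice, a proper inclusion can collapse to an equality mod $(q-1)$. The fix is standard but must be said: work over the local ring $R=\C[q]_{(q-1)}$, take a proper nonzero $\Bc_R$-submodule $W_R$ of the $R$-free module $\im P_i$, pass to its saturation $\widetilde W_R=\{x\in\im P_i : (q-1)^k x\in W_R\text{ for some }k\}$, and observe $\widetilde W_R$ is a free direct summand of strictly smaller rank whose image in $\im P_i/(q-1)\im P_i$ is a proper nonzero $\k'$-submodule of a simple module. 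Relatedly, ``upper semicontinuity of rank'' is the wrong invocation; for the idempotent $P_i$ over the local ring $R$ the rank is simply constant (its image is $R$-free because it is a direct summand of a free module), which is cleaner and avoids the semicontinuity language altogether.

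Second, your part~3 is claimed to be a ``formal consequence of part~1'' with $\hat P=\sum_{i\in S}\hat P_i$. This only yields $\Bc$-submodules that are unions of the fixed blocks $\im P_i$, which suffices when the relevant decomposition is multiplicity-free. For an irreducible $V$ the branching $\s\o(2n+1)\downarrow\s\o(2n)$ is indeed multiplicity-free, but the proposition is stated for arbitrary finite-dimensional $V$, where multiplicity does occur (e.g.\ $V=V_1\oplus V_2$ sharing a common $\k$-type). In that case a simple $\Bc$-submodule of $V$ can sit diagonally across two of your blocks and will not equal any $\im P_i$. You must either note multiplicity-freeness and reduce to irreducible $V$, or allow the full family of $U_q(\g)$-invariant idempotents $\hat P$ (not just sums of a fixed atomic set), since different decompositions of $V\tp M$ into highest weight summands give different $\hat P$; then $\hat P\mapsto (\id\tp\chi)(\hat P)$ surjects onto all $\Bc$-invariant idempotents because $\chi$ induces an isomorphism $\End_{U_q(\g)}(V\tp M)\xrightarrow{\sim}\End_\Bc(V)$ between the two commutants. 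This last isomorphism is the real content of part~3 and deserves to be stated explicitly rather than subsumed into a formal corollary.
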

The proof is similar to \cite{M4}. Note that Proposition \ref{B-red} holds for almost $q$
for each $V$.

Proposition \ref{B-red}  also  can be given a category theoretical interpretation.
Denote by $\Fin_q(\k')$ the additive category of $\Bc$-modules whose objects are submodules in some $V$ from $\Fin_q(\g)$
and morphisms are $\Bc$-invariant maps. Since it is semi-simple, it is Abelian and it is also a module category over $\Fin_q(\g)$.
Given an equivariant projective $\Ac$-module $\Gamma$, the assignment $\Gamma\mapsto \Gamma\tp_\Ac \C$ via the character $\chi$
is a functor $\mathrm{Proj}(\Ac,\g)\to \Fin_q(\k')$. This is an equivalence
of module categories.

\subsection{Vector bundles via symmetric pairs}
\label{SecSymPair}
The realization of quantum vector bundles as linear maps between pseudo-parabolic Verma modules has no classical analog.
In this section we follow a different approach presenting an associated vector bundle by $\Bc$-invariants
in the tensor product of $\Tc$ and a  $\Bc$-module.  This construction is quasi-classical: in the limit $q\to 1$
we recover the standard construction of equivariant vector bundles as induced modules.
It is convenient to pass to  left  $\Ac$-modules, which corresponds to the right coset picture.

It is known that every finite-dimensional right $U_q(\g)$-module $V$ is a left $\Tc$-comodule.
We use a Sweedler-like notation for the left coaction $V\to \Tc\tp V$,  $v\mapsto v^{(1)}\tp v^{[2]}$.
Then $v\tl h=(v^{(1)},h)v^{[2]}$ for $v\in V$, $h\in U_q(\g)$.

We define a left $U_q(\g)$-action on $V$ by $h\btr v=v\tl \gm(h)$, $h\in U_q(\g)$, $v\in V$,
and consider $\Ac\tp V$ as a left  $U_q(\g)$-module. The tensor product $\Tc\tp V$ is
also a left $U_q(\g)$-module with respect to the left translations on $\Tc$ and the trivial
action on $V$.

Let $\hat P\in \End(V)\tp \Ac$ be a $U_q(\g)$-invariant idempotent and $P=(\id \tp \chi)(\hat P)$ its $\Bc$-invariant image.
Denote by $X=PV$ the corresponding $\Bc$-submodule in $V$.
The subspaces $(\Ac\tp V)\hat P_{21}\subset \Tc\tp V$ (the diagonal $U_q(\g)$-action) and $(\Tc\tp X)^\Bc\subset \Tc\tp V$
(trivial $U_q(\g)$-action on $V$) are isomorphic as left $\Ac$-modules and
$U_q(\g)$-modules, \cite{M4}.
\begin{thm}
The $\Ac$-module $(\Tc\tp X)^\Bc$ is a quantization of the vector bundle $\Gamma(\Sbb^{2n},X)$.
\end{thm}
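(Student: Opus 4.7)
The plan is to leverage the isomorphism $(\Tc\tp X)^\Bc \simeq (\Ac\tp V)\hat P_{21}$ of left $\Ac$-modules and $U_q(\g)$-modules cited from \cite{M0} immediately before the statement, and then to transport the quantization property of Theorem \ref{hom-bund1} from its right-module form $\hat P(V\tp \Ac)$ to the present left-module form via the opposite-side convention. Since $\hat P \in \End(V)\tp \Ac$ is $U_q(\g)$-invariant, the two presentations carry the same equivariant structure, and $(\Ac\tp V)\hat P_{21}$ is thereby identified with the object shown in the previous theorem to quantize $\Gamma(\Sbb^{2n},X)$.

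What still has to be verified on the presentation $(\Tc\tp X)^\Bc$ itself is the quasi-classical feature announced in the introduction of the subsection: at $q=1$ we must recover the classical induced-module construction. The algebra $\Tc$ specializes to $\C[G]$ with $G$ the spin cover of $SO(2n+1)$; the coideal subalgebra $\Bc$, generated via the matrix $\Ru_{21}A_1\Ru_{12}$, specializes to $U(\k')$, where $\k'\simeq \s\o(2n)$ is the isotropy Lie algebra of the point $\chi$ on $\Sbb^{2n}$, as established in the construction of $\Bc$ as a deformation of $U(\k')$. By Proposition \ref{B-red}(2), the fiber $X=PV$ degenerates to an irreducible $U(\k')$-submodule $X_0 \subset V$, so $(\Tc\tp X)^\Bc$ at $q=1$ becomes $(\C[G]\tp X_0)^{\k'}$, which is the standard presentation of the space of global sections $\Gamma(G/K',X_0)=\Gamma(\Sbb^{2n},X_0)$ of the associated homogeneous vector bundle.

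The main technical obstacle is flatness of the deformation, i.e.\ the absence of any drop in dimensions of weight subspaces at $q=1$. Here Proposition \ref{B-red}(3) is decisive: every classical $\k'$-invariant projector in $V$ lifts to a $\Bc$-invariant projector $P=(\id\tp\chi)(\hat P)$ coming from a $U_q(\g)$-invariant idempotent $\hat P \in \End(V)\tp\Ac$. Combined with the Peter-Weyl type decomposition of $\Tc$ under the left shift action of $U_q(\g)$, which reproduces as a $U_q(\g)$-module the classical decomposition of $\C[G]$, this rules out any collapse of multiplicities in $(\Tc\tp X)^\Bc$. Together with the equivariant identification with $\hat P(V\tp \Ac)$ from the first paragraph, these ingredients establish that $(\Tc\tp X)^\Bc$ is a flat $U_q(\g)$-equivariant deformation of $\Gamma(\Sbb^{2n},X)$, completing the proof along the lines of the parallel result in \cite{M0}.
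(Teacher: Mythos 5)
Your proposal is correct and follows what the paper intends with its terse reference to \cite{M0}: identify $(\Tc\tp X)^\Bc$ with $(\Ac\tp V)\hat P_{21}$ via the isomorphism stated just before the theorem, transfer the quantization property from the right-module form of Theorem \ref{hom-bund1}, and confirm the classical limit and flatness of the deformation using Proposition \ref{B-red}. The paper's own proof is only ``Similar to \cite{M0},'' and your elaboration fills in exactly the steps that reference points to without departing from its plan.
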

\begin{proof}
Similar to \cite{M4}.
\end{proof}
By considering vector (integer-spin) modules, one can restrict consideration to $\Tc$  the function algebra
on the quantum orthogonal group.

\appendix
\section{Parabolic Verma modules}
\label{Sec_ParVerMod}
It is known that a parabolic Verma module over a classical universal enveloping algebra $U(\g)$ of a simple Lie algebra $\g$
relative to a Levi subalgebra $\l\subset \g$ is locally finite over $U(\l)$, cf. \cite{Hum}. The goal of this auxiliary section
is to prove an analogous statement for quantum groups. We are not aware if this topic is covered in the literature
and include it for completeness.

We start with a few general definitions.
Let $A$ be an associative algebra with unit.
Fix an element $a\in A$ and define its quasi-normalizer $\mathrm{QNorm}(a)\subset A$ as the subset of elements
$b\in A$ such that $a^{m+k} b=b_k a^k$ for some $m\in Z_+$, $b_k\in A$ and all positive $k\in \Z_+$.
Clearly it is a unital subalgebra in $A$.

Here is a way of checking if $b\in \mathrm{QNorm}(a)$.
Consider a sequence of complex numbers $(q_i)_{i=1}^\infty$ and define by induction
$D_a^0(b)=b$, $D_a^n(b)=[a,D_a^{n-1}(b)]_{q_n}$ for $n>0$.
\begin{propn}
  Suppose that $D_a^{m+1}(b)=0$ for some $m\geqslant 0$. Then $b\in \mathrm{QNorm}(a)$.
\label{Leibnitz}
\end{propn}
\begin{proof}
  One can prove by induction that $a^nb=\sum_{k=0}^{n}c_{n,k}D^k_a(b) a^{n-k}$ for all positive integer $n$, where $c_{n,k}$ are some complex coefficients.
It follows that $D_a^l(b)=0$ for $l>m$, and
$a^nb=(\sum_{k=0}^{m}c_{n,k}D^k_a(b) a^{m-k})a^{n-m}$, as required.
\end{proof}
We call an element $a\in A$ quasi-normal if $\mathrm{QNorm}(a)= A$. Their set is stable under the group of automorphism
of $A$. It is straightforward that  invertible elements of $A$ are quasi-normal, as well as elements of the center of $A$.
We aim to prove that Lusztig root vectors in $U_q(\g)$ are quasi-normal.

Recall that a total order in  $\Rm^+$ is called normal if every sum of positive roots is between the summands.
Fix such an order and set $\al^i\in \Rm^+$, $i=1,\ldots, N=\# \Rm^+$.
Then there exists a system of "root vectors" $\{e_i\}_{i=1}^N$ of weights $\al^i$
such that the monomials $\{e_1^{m_1}\ldots e_N^{m_N}\}_{m_i\in \Z_+}$ deliver a PBW basis in $U_q(\g_+)$. Similarly there are
negative root vectors $f_i\in U_q(\g_-)$ giving rise to a PBW basis of ordered monomials in $U_q(\g_-)$.

\begin{propn}
  The Lusztig root vectors are quasi-normal.
  \label{Lusztig-q-norm}
\end{propn}
\begin{proof}
  It is sufficient to prove that generators of $U_q(\g)$ belong to $\mathrm{QNorm}(e_i)$ for each
  $e_i$. First suppose that $e_i=e_\al$ for a simple root $\al$. Then clearly $U_q(\h)\subset \mathrm{QNorm}(e_\al)$.
  Furthermore, $e^n_\al f_\bt=(\dt_{\al\bt} f_\bt e_\al + [n]_q[h_\al+n-1]_q) e_\al^{n-1}$, so $U_q(\g_-)\subset \mathrm{QNorm}(e_\al)$.
  Finally, the Serre relations can be written as $D_{e_\al}^{m}(e_\bt)=0$ for certain integer $m$ and a sequence $(q_i)_{i=1}^m$ depending on
  $\al$ and $\bt$. Then by Proposition \ref{Leibnitz},
  $U_q(\g_+)\subset \mathrm{QNorm}(e_\al)$, and therefore $e_\al$ is quasi-normal. Now observe that
  compound root vectors are obtained from the Chevalley generators via automorphisms of $U_q(\g)$, \cite{ChP}. This completes the proof.
\end{proof}
Let $\l$ be a Levi subalgebra in $\g$ with a basis of simple roots $\Pi^+_\l\subset \Pi^+_\g$.
One can choose a normal order so that $\Rm^+\backslash \Rm^+_\l<\Rm^+_\l$.
Then  $\Span\{f_1^{m_1}\ldots f_l^{m_l}\}_{m_i\in \Z_+}$, where $l=\# (\Rm^+\backslash \Rm^+_\l)$,
is a subalgebra, cf. \cite{KT1}, Proposition 3.3 (see also \cite{Ke}). It is a quantization
of the $U(\n_-)$, where $\n_\pm$ are the nil-radicals of the parabolic Lie algebras $\p_\pm=\l+\g_\pm$.
Similarly $\Span\{e_l^{m_l}\ldots e_1^{m_1}\}_{m_i\in \Z_+}$ is
a subalgebra,   $U_q(\n_+)$.
The PBW factorization then yields  $U_q(\g)=U_q(\n_-)U_q(\p_+)$ with $U_q(\p_+)=U_q(\l)U_q(\n_+)$.

Now we use an alternative construction of parabolic Verma modules  via parabolic induction.
Given $\check X\in \Fin_q(\l)$ (relax the quasi-classical weight convention) make it a $U_q(\p_+)$-module by setting it trivial on $U_q(\n_+)$.
Then $M_{\check X}=U_q(\g)\tp_{U(\p_+)}\check X$ is said to be parabolically induced from $\check X$.
In the special case of irreducible  $\check X$, this construction recovers
the parabolic Verma module as the quotient of the ordinary Verma module whose  highest weight is that of $\check X$.
The PBW factorization of $U_q(\g_-)$ implies that  $M_{\check X}$ is a free $U_q(\n_-)$-module generated by $\check X$.

Recall that an $A$-module $V$ is called locally finite if for each $v\in V$ the cyclic submodule $Av\subset V$ is finite-dimensional. An element $a\in A$ is called locally nilpotent if $a^nv=0$ for all $v\in V$ and some positive $n\in \Z_+$ depending on $v$.
\begin{propn}
  The parabolic module $ M_{\check X}$ is locally finite over $U_q(\l)$.
  \label{loc_fin_parabolic}
\end{propn}

\begin{proof}
It is sufficient to prove that $ M_{\check X}$ is locally finite over $U_q(\l_-)$ since it is clearly so over  $U_q(\h)$ and $U_q(\l_+)$.
Indeed, every $f_\al$ with $\al\in \Rm^+_\l$ is locally nilpotent on $\check X$ and therefore on $ M_{\check X}\simeq U_q(\n_-) \check X$ since
it is quasi-normal. Therefore all but a finite number of PBW-monomials in $U_q(\l_-)$ vanish on every $v\in M_{\check X}$.
This implies the statement.
\end{proof}
It follows that $ M_{\check X}$ is completely reducible over $U_q(\l)$ and is an infinite direct sum of irreducible finite-dimensional
modules. As another consequence, $U_q(\n_-)$ is a locally finite adjoint $U_q(\l)$-module because it is isomorphic to the
parabolic Verma module induced from the trivial representation of $U_q(\p_+)$.
\section{Tensor product of modules of  highest and lowest weight}
For reader's convenience, we prove some facts that we use in the proof of Proposition
\ref{homomorphism}.
Recall that a lower Verma module is freely generated over $U_q(\g_-)$ by its lowest weight vector.
\begin{lemma}
  Let $\hat M_\mu$ be a  Verma module of highest weight $\mu$ and $\hat N_\nu$ a Verma module of lowest weight $\mu$.
  Then the tensor product $\hat M_\mu \tp \hat N_\nu$ is isomorphic to the $U_q(\g)$-module $\Ind_\h^\g(\C_{\mu+\nu})$
  induced from the $U_q(\h)$-module $\C_{\mu+\nu}$.
\label{upper-lower}
\end{lemma}
\begin{proof}
  Let us prove that the homomorphism $\Ind_\h^\g(\C_{\mu+\nu})\to \hat M_\mu \tp \hat N_\nu$ determined by the assignment $d\colon x\mapsto \Delta(x) (1_{\mu}\tp 1_\nu)$, $x\in U_q(\g)$, is an isomorphism. Introduce a $\Z$-grading on $U_q(\g_\pm)$ by assigning degree $1$ to the
  Chevalley generators and denote by $U^k_\pm$ their homogeneous component of degree $k$. This makes the vector spaces $\Ind_\h^\g(\C_{\mu+\nu})=U_q(\g_-)U_q(\g_+)1_{\mu+\nu}$ and
  $\hat M_\mu \tp \hat N_\nu$ graded by the semi-group $\Z^2_+$. The map $d$ is an isomorphism on the components $(0,j)$ for all $j\in \Z_+$.
  Since $f_\al$, $\al\in \Pi^+_\al$, are quasi-primitive, the map $d$  sends the $U^i_-U^j_+1_{\mu+\nu}$ onto the
  $U^i_-1_{\mu}\tp U^j_+1_{\nu}$ modulo $\sum_{l<i,k\leqslant j}U^l_-1_{\mu}\tp U^k_+1_{\nu}$.
  Induction on $i$ proves that $d$ is surjective and therefore injective on $\sum_{l\leqslant i,k\leqslant j}U^l_-U^k_+1_{\mu+\nu}$ for all $j$, as the graded components are finite-dimensional.
\end{proof}
It follows from Lemma \ref{upper-lower} that $\Hom (\hat M\tp \hat N,V)\simeq V[\mu+\nu]$ for any $V$.
Suppose $\{f_i\}\subset  U_q(\g_-)$ are such that $f_i1_\mu$ are singular vectors in $\hat M_\mu$ and let $M$ be the quotient of $\hat M$ by
the sum of submodules they generate. Similarly let $\{e_j\} \subset  U_q(\g_+)$ be such that $e_i1_\nu $ are killed by all Chevalley generators $f_\al$
and let $N$ denote the corresponding quotient of $\hat N_\nu$.
\begin{corollary}
For any  finite-dimensional module $V$, $\Hom (M\tp N,V)$ is isomorphic to $\Bigl(\cap_{i,j}\ker_V(f_i)\cap \ker_V(e_j)\Bigr)[\mu+\nu]$.
\label{high-low}
\end{corollary}
\begin{proof}
  The module $M\tp N$ is a quotient of $\hat M_\mu\tp \hat N_\nu$ by the sum of $U_q(\g_-)f_i1_\mu\tp \hat N_\nu$
  and $\hat M_\mu\tp U_q(\g_+)e_j1_\nu$, which disappear in a homomorphism to $V$. By Lemma \ref{upper-lower}, all these submodules are
  induced from one-dimenstional $U_q(\h)$-modules.
   Therefore $\Hom (M\tp N,V)$ is in bijection with  vectors in $V[\mu+\nu]$ that are killed by all $f_i$ and all $e_j$.
\end{proof}

\end{document}